\theoremstyle{plain}
\newtheorem{theorem}{Theorem}
\newtheorem{lemma}[theorem]{Lemma}
\theoremstyle{definition}
\newtheorem{assumption}[theorem]{Assumption}
\theoremstyle{remark}
\newtheorem{remark}{Remark}
\def\paragraph#1{\noindent \textbf{#1}}
\numberwithin{equation}{section}
\def\Cov{\mathop{\rm Cov}\nolimits}
\def\d{\mathrm{d}}
\def\<{\langle}
\def\>{\rangle}
\def\a{\alpha}
\def\e{\e}
\def\l{\lambda}
\def\r{\rho}
\def\s{\sigma}
\def\th{\theta}
\def\vt{\vartheta}
\def\z{\zeta}
\def\o{\omega}
\def\L{\Lambda}
\def\O{\Omega}
\def\R{{\Bbb R}}  
\def\N{{\Bbb N}}  
\def\P{{\Bbb P}}  
\def\Z{{\Bbb Z}}  
\def\E{{\Bbb E}}  
\def\Va{{\Bbb V}}  
\let\cal=\mathcal
\def\BB{{\cal B}}
\def\CC{{\cal C}}
\def\DD{{\cal D}}
\def\FF{{\cal F}}
\def\OO{{\cal O}}
\def\WW{{\cal W}}
\def\YY{{\cal Y}}
\def\ZZ{{\cal Z}}
 \def \L {{\Lambda}}
 \def \e {{\varepsilon}}
 \def \s {{\sigma}}
 \def \z {{\zeta}}
 \def \th {{\theta}}
 \def \l {{\lambda}}
 \def \d {{\delta}}
 \def \a {{\alpha}}
 \def \o {{\omega}}
 \def \O {{\Omega}}
 \def \th {{\theta}}
 \def \r {{\rho}}
 \def \ba {\begin{array}}
 \def \ea {\end{array}}
 \newcommand{\be}{\begin{equation}}
 \newcommand{\ee}{\end{equation}}
\newcommand{\bea}{\begin{eqnarray}}
 \newcommand{\eea}{\end{eqnarray}}
\def\TH(#1){\label{#1}}\def\thv(#1){\ref{#1}}
\def\Eq(#1){\label{#1}}\def\eqv(#1){(\ref{#1})}
\def\sfrac#1#2{{\textstyle{#1\over #2}}}
 \def \1{\mathbbm{1}}
\def\wt {\widetilde}
\def\wh{\widehat}
\def \so {\text{\small{o}}(1)}
\def \po {\text{\small{o}}}
\begin{document}

 \title[Conditional  Strong Large Deviations]{A conditional strong large deviation result and a functional central limit theorem for the rate function}
 
\author[A. Bovier]{Anton Bovier}
 \address{A. Bovier\\Institut f\"ur Angewandte Mathematik\\
Rheinische Friedrich-Wilhelms-Universit\"at\\ Endenicher Allee 60\\ 53115 Bonn, Germany}
\email{bovier@uni-bonn.de}
\author[H. Mayer]{Hannah Mayer}
\address{H. Mayer\\ Institut f\"ur Angewandte Mathematik\\
Rheinische Friedrich-Wilhelms-\newline Universit\"at\\ Endenicher Allee 60\\ 53115 Bonn, Germany}
\email{hannah.mayer@uni-bonn.de}

\subjclass[2000]{60F05, 60F10, 60F17, 60K37} 
\keywords{Large deviations; Exact asymptotics;  Functional central limit theorem; Local limit theorem}

\thanks{We are very grateful to Frank den Hollander for numerous discussions and valuable input. A.B. is partially supported through the German Research Foundation in the Priority Programme 1590 ``Probabilistic Structures in Evolution'' and the Hausdorff Center for Mathematics (HCM).  He is member of the Cluster of Excellence ``ImmunoSensation'' at Bonn University.
H.M. is supported by the German Research Foundation in the Bonn International Graduate School in Mathematics (BIGS) and the Cluster of Excellence ``ImmunoSensation''.}

\begin{abstract}
We study the large deviation behaviour of $S_n=\sum_{j=1}^nW_jZ_j$, where 
$(W_j)_{j\in \N}$ and $(Z_j)_{j\in\N }	$ are sequences of real-valued, independent and 
identically distributed random variables satisfying certain moment conditions, 
independent of each other. More precisely, we prove a conditional strong large deviation 
result and describe the fluctuations of the 
random rate function through a  functional central limit theorem. 
\end{abstract}

\maketitle

\section{Introduction and Results}
Let  $(Z_j)_{j\in\N}$ be  independent, identically distributed (i.i.d.) random variables and let $(W_j)_{j\in \N}$ be i.i.d. random variables as well. Define the $\sigma$-fields \mbox{$\ZZ \equiv \sigma(Z_j, j\in\N)$} and \mbox{$\WW \equiv \sigma(W_j, j\in \N)$} and let $\ZZ$ and $\WW$ be independent. Furthermore, define 
\begin{equation}\Eq(first.1)
	S_n \equiv \sum_{j=1}^n Z_jW_j.
\end{equation}
In this paper we derive \emph{strong (local) large deviation} estimates on  $S_n$ 
\emph{conditioned} on the $\sigma$-
field $\WW$. The random variables $W_j$ can be interpreted as a random environment 
weighting the summands of $S_n$. Conditioning on $\WW$ can thus be understood as 
fixing the environment.  Comets \cite{C} investigates conditional large deviation estimates of 
such sums in the more 
 general setup of i.i.d. random fields of random variables taking values in a Polish 
 Space.  His results concern, however, only the standard \emph{rough} large deviation 
 estimates. Local limit theorems have been obtained in the case $S_n\in \R$  
 (see e.g. \cite{BR, CS}) and for the case $S_n\in\R^d$ (see \cite{I}), 
 but these have, to our knowledge,
  not been applied to conditional laws of sums of the form \eqv(first.1). 

Our result consists of two parts. The first part is an almost sure local limit theorem 
for the conditional tail probabilities $\P(S_n\geq a n|\WW)$,  $a\in \R$. 
The second part is a functional central limit theorem for the \emph{random} rate 
function. 

\subsection{Strong large deviations}
For a general review of \emph{large deviation theory} see for example den Hollander \cite{FdH} or 
Dembo and Zeitouni \cite{DZ}.
A \emph{Large deviation  principle} for a family of real-valued random variables $S_n$ 
roughly says that, for $a>\E \left[\frac 1nS_n\right]$,
\begin{equation}
	\P(S_n\geq an)=\exp\left[-nI(a)(1 + \so)\right].
\end{equation}
The G\"artner-Ellis theorem asserts that the 
 \emph{rate function}, $I(a)$, is obtained as the limit of 
the Fenchel-Legendre transformation of the logarithmic moment generating function of $S_n$, to wit $I(a)=\lim_{n\to\infty} I_n(a)$, where $I_n(a)$ is defined by
\begin{equation}
	 I_n(a)\equiv \sup_{\vt}(a\vt-\Psi_n(\vt)) = a\vt_n-\Psi_n(\vt_n),
\end{equation}
where $\Psi_n(\vt)\equiv \frac 1n\log\E[\exp(\vt S_n)]$ and $\vt_n$ satisfies $\Psi_n'(\vt_n)=a$.

 \emph{Strong large deviations estimates }  refine this  \emph{exponential  asymptotics}.
They provide estimates of the form  
\begin{equation}
	\P(S_n\geq an)=\frac{\exp(-nI_n(a))}{\vt_n\s_n\sqrt{2\pi n}}[1+\so],
\end{equation}
where $\s_n^2\equiv \Psi_n''(\vt_n)$ denotes the variance of $\frac{1}{\sqrt n}S_n$
under the \emph{tilted} law $\wt \P$ that has density 
\begin{equation}\Eq(tilted.1)
\frac {d\wt\P}{d\P} =\frac{\hbox{\rm e}^{\vt_nS_n}}{\E \left[\hbox{\rm e}^{\vt_nS_n}\right]}.
\end{equation}
 The standard theorem for $S_n$ a sum of i.i.d. random variables is due to Bahadur and Ranga Rao \cite{BR}. The generalisation, which we summarise by Theorem \ref{abwthm},
is a result of  Chaganty and Sethuraman \cite{CS}. We abusively refer to $I_n(a)$ as the \emph{rate function}.
%

The following theorem is based on 2 assumptions. 
\begin{assumption}\label{ass1}
	There exist $\vt_* \in (0,\infty)$ and $\beta < \infty$ such that
	\begin{equation}
	|\Psi_n(\vartheta)|<\beta, \text{ for all } \vartheta \in  \lbrace \vartheta \in \mathbb{C} : |\vartheta| <\vartheta_*\rbrace  .
	\end{equation} 
	for all $ n\in\mathbb N$ large enough.
\end{assumption}	
\begin{assumption}\label{ass2}
$(a_n)_{n\in\mathbb{N}}$ is a bounded real-valued sequence such that  the equation 
	    \begin{equation} \label{Theta}
		  a_n = \Psi_n'(\vartheta)
	    \end{equation}
	has a solution
 	$\vartheta_n \in (0,\vartheta_{**})$ with $\vartheta_{**} \in (0,\vartheta_*) $ for all $n\in\mathbb{N}$ large enough.
\end{assumption}
\begin{theorem}[Chaganty and Sethuraman \cite{CS}] \label{abwthm}
Let $S_n$ be a sequence of real-valued random variables defined on a probability space $(\O,\FF,\P)$. Let $\Psi_n$ be their logarithmic moment generating function defined above and assume that
	 Assumptions \ref{ass1} and \ref{ass2} hold for $\Psi_n$. 
	 Assume furthermore that
 \begin{enumerate}[(i)]
    \item $\lim_{n \to \infty} \vartheta_n \sqrt{n} = \infty $ \label{1}
    \item $\liminf_{n\to \infty} \sigma_n^2 > 0$ and \label{2}
    \item $\lim_{n \to \infty} \sqrt{n} \sup_{\delta_1 \leq \left|t\right|\leq\delta_2\vt_n} 	\left|\frac{\Phi_n\left(\vt_n+it\right)}{\Phi_n\left(\vt_n\right)}\right|=0 \quad \forall \, 0<\delta_1<\delta_2<\infty$, \label{3}
  \end{enumerate}
	are satisfied. 
	Then
\begin{equation}\label{approx}
				\mathbb{P}\left(S_n\geq na_n\right) = \frac{\exp(-nI_n(a_n))}{\vt_n\sigma_n\sqrt{2\pi n}} \left[1+\so \right], \, n\to \infty .
\end{equation}
\end{theorem}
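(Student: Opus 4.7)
The plan is to follow the classical Esscher tilting strategy of Bahadur--Ranga Rao, upgraded to the triangular-array setting via a local central limit theorem whose Fourier-analytic proof uses hypotheses (i)--(iii) precisely. First I introduce the tilted law $\wt\P$ with
\[
\frac{d\wt\P}{d\P} = e^{-n\Psi_n(\vt_n) + \vt_n S_n},
\]
so that the choice $a_n = \Psi_n'(\vt_n)$ gives $\wt\E[S_n] = na_n$ and tilted variance $\s_n^2 = \Psi_n''(\vt_n)$. Setting $Y_n \equiv (S_n - na_n)/(\s_n\sqrt n)$ and unwinding the Radon--Nikodym density through $I_n(a_n) = a_n\vt_n - \Psi_n(\vt_n)$ yields the exact identity
\[
\P(S_n \geq na_n) = e^{-nI_n(a_n)}\,\wt\E\!\left[ e^{-\vt_n\s_n\sqrt n\,Y_n}\,\1_{Y_n\geq 0}\right].
\]

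The next step is a Laplace-type asymptotic for the tilted expectation. Because $\vt_n\s_n\sqrt n \to \infty$ by (i) and (ii), the weight $e^{-\vt_n\s_n\sqrt n\,y}$ concentrates on $y$ of order $(\vt_n\s_n\sqrt n)^{-1} \to 0$. The substitution $u = \vt_n\s_n\sqrt n\,y$ rewrites the expectation as
\[
\frac{1}{\vt_n\s_n\sqrt n}\int_0^\infty e^{-u}\, g_n\!\left(\tfrac{u}{\vt_n\s_n\sqrt n}\right) du,
\]
where $g_n$ denotes the (possibly smoothed) density of $Y_n$ under $\wt\P$. If one shows that $g_n(y) \to (2\pi)^{-1/2}$ uniformly on a shrinking neighbourhood of $0$ and controls $g_n$ enough at infinity to apply dominated convergence, then the integral tends to $(2\pi)^{-1/2}$ and \eqref{approx} follows after collecting the prefactor $1/(\vt_n\s_n\sqrt n)$.

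It remains to supply that local CLT for $Y_n$ under $\wt\P$, whose characteristic function factorises as
\[
\wt\vp_n(t) = e^{-ita_n\sqrt n/\s_n}\,\frac{\Phi_n\bigl(\vt_n + it/(\s_n\sqrt n)\bigr)}{\Phi_n(\vt_n)}.
\]
Fourier inversion represents $g_n$ as a smoothed integral of $\wt\vp_n$, which I would split into three frequency bands. For $|t| \leq A$ fixed, a Taylor expansion of $\Psi_n$ around $\vt_n$ combined with Assumption \ref{ass1} and (ii) gives $\wt\vp_n(t) = e^{-t^2/2}(1+\so)$ uniformly. For intermediate frequencies $\d_1\s_n\sqrt n \leq |t| \leq \d_2\vt_n\s_n\sqrt n$, hypothesis (iii) after the rescaling $t \mapsto t/(\s_n\sqrt n)$ supplies $\sup |\wt\vp_n(t)| = \po(n^{-1/2})$. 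The remaining high frequencies are killed by analyticity and uniform boundedness of $\Psi_n$ on the complex disc $|\vt| < \vt_*$ (Assumption \ref{ass1}), which converts into a geometric decay in $n$ of the corresponding piece of the inversion integral.

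The change of measure and the Laplace calculation are essentially mechanical; the real work lies in the Fourier step. The delicate issue is to propagate the pointwise limit $g_n(0) \to (2\pi)^{-1/2}$ into uniformity on a window of size $1/(\vt_n\sqrt n)$, together with a uniform tail bound on $g_n$ strong enough to dominate the Laplace integrand of the second paragraph. Conditions (i) and (ii) are exactly what is needed to keep the saddle $\vt_n$ non-degenerate and strictly interior to the domain of analyticity, while (iii) is tailor-made to eliminate the intermediate-frequency contribution to the Fourier inversion, precisely the regime no purely local expansion of $\Psi_n$ can touch.
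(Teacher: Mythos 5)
The paper does not prove Theorem~\ref{abwthm}; it cites it from Chaganty and Sethuraman \cite{CS}, adding only the remark that the result ``is deduced from a local central limit theorem for $(S_n-na_n)/\sqrt{n\sigma_n^2}$ under the tilted law,'' which your proposal follows. Your change of measure, the exact identity
\begin{equation*}
\P(S_n\geq na_n)=e^{-nI_n(a_n)}\,\wt\E\!\left[e^{-\vt_n\sigma_n\sqrt n\,Y_n}\,\mathds 1_{Y_n\geq 0}\right],
\end{equation*}
the Laplace substitution, and the formula for the tilted characteristic function are all correct and are exactly what the cited proof uses. The low-frequency expansion (and its extension to $|t|\leq\delta_1\sigma_n\sqrt n$ by choosing $\delta_1$ small, using Assumption~\ref{ass1} to control the third derivative via Cauchy estimates) and the use of condition~(iii) on the intermediate band after the rescaling $t\mapsto t/(\sigma_n\sqrt n)$ are also right.

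There is, however, a genuine gap in your treatment of the high-frequency band $|t|>\delta_2\vt_n\sigma_n\sqrt n$. You claim this band is ``killed by analyticity and uniform boundedness of $\Psi_n$ on the complex disc $|\vt|<\vt_*$,'' giving ``geometric decay in $n$.'' This is not true. Assumption~\ref{ass1} bounds $\Psi_n$ only on a fixed complex disc; for real $s$ beyond $\vt_*-\vt_n$ it says nothing, and within the disc the only bound it yields on $|\Phi_n(\vt_n+is)/\Phi_n(\vt_n)|$ is $\exp\{n(\beta-\Psi_n(\vt_n))\}$, which is exponentially \emph{large}. Worse, no hypothesis of the theorem forces decay of the tilted characteristic function at high frequency: if $Z_1$ is lattice and the $W_j$ are constant, $s\mapsto|\Phi_n(\vt_n+is)/\Phi_n(\vt_n)|$ is periodic and does not tend to $0$, yet the theorem still applies. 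The correct device, which you gesture at with ``(possibly smoothed) density'' and then drop, is the Esseen smoothing step: one convolves with a kernel (Fej\'er or triangular) whose Fourier transform is supported in $|t|\leq\delta_2\vt_n\sigma_n\sqrt n$, so the inversion integral has \emph{no} high-frequency contribution to begin with, and one then separately bounds the smoothing error in terms of the bandwidth and the tilted tail. This smoothing argument, together with the need to prove the local CLT uniformly on a window of size $O\bigl((\vt_n\sqrt n)^{-1}\bigr)$ around zero rather than just pointwise, is precisely where the technical content of Chaganty--Sethuraman lives and is what your proposal leaves out. Finally, $Y_n$ need not have a density at all, so the Laplace integral should be written against $\wt\P(Y_n\in dy)$ and an integration-by-parts or Abel-summation step inserted before Fourier inversion can be brought to bear.
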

This result is deduced from a local central limit theorem for $\frac{{S}_n-na_n}
{\sqrt {n\s_n^2}}$ under the tilted law $\wt \P$ defined in \eqv(tilted.1).
\begin{remark}
There are estimates for $\P(S_n\in n\Gamma)$, where $S_n \in \R^d$ and $\Gamma \subset \R^d$, see \cite{I}. Then the leading order prefactor depends on $d$ and the geometry of the set $\Gamma$.  
\end{remark}
\subsection{Application to the conditional scenario}
Throughout the following we write $I_n^{\WW}(a)$ and $\vt_n^{\WW}(a)$ to emphasise that these are random quantities.

\begin{remark}
Alternatively, one could condition on a different $\sigma$-field $\YY$ as in the application to financial mathematics and an immunological model described in Section \ref{applications}. In the proofs we just need the fact that $\WW \subset \YY$ and $\ZZ$ is independent of $\YY$. 
\end{remark}

\begin{theorem}\label{LargeDev} Let $S_n$ be defined in \eqv(first.1). 
Assume that the random variables $W_1$ and $Z_1$ satisfy the following conditions:
\begin{enumerate}[(i)]
	\item  {$Z_1$ is not concentrated on one point.}
	\begin{enumerate}
		\item If $Z_1$ is lattice valued, $W_1$ has an absolutely continuous part and there exists an interval $[c,d]$ such that the density of $W_1$ on $[c,d]$ is bounded from below by $p>0$.  \label{Nolattice}
		\item  If $Z_1$ has a density, $\P(|W_1|>0)>0$ \label{Nolattice2}.
		\end{enumerate}
	\item  \label{momgenZ} The moment generating function of $Z_1$, $M(\vt)\equiv \mathbb E[\exp(\vartheta Z_1)]$, is finite for all $\vartheta\in \R$. %
	\item  For $f(\vt)\equiv \log M (\vt)$, both $\E[f(\vt W_1)]$ and $\E[W_1f'(\vt W_1)]$ are finite for all $\vt\in \R$.\label{expectations}
	\item There exists a function $F:\R\to\R$ such that $\E[F(W_1)]<\infty$ and \\
	 $W_1^2f''(\vt W_1)\leq F(W_1)$. \label{majorant}
\end{enumerate}

Let $\vt_*\in \R_+$ be arbitrary but fixed. Let  $J\equiv\left(\E[W_1]\E[Z_1], \E[W_1f'(\vt_{*}W_1)]\right)$ and let  $a\in J$.
Then 
\begin{equation}\label{representation}
\P\Bigg( \forall a \in J: \P(S_n\geq an|\WW)=\frac{\exp(-nI_n^{\WW}(a))}{\sqrt{2\pi n}\vt_n^{\WW}(a)\s_n^{\WW}(a)}(1+\so)\Bigg)=1,
\end{equation}
where
 \begin{equation}\label{ratefunction}
 	I_n^{\WW}(a) =   a\vartheta_n^{\WW}(a)-\frac 1n \sum_{j=1}^{n}f\left(W_j\vt_n^{\WW}(a)\right)
 \end{equation}
and $\vt_n^{\WW}(a)$ solves $a=  \frac{d}{d\vt}(\frac 1n \sum_{j=1}^n  f(W_j \vt))$.
\end{theorem}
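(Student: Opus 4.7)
The plan is to apply the Chaganty--Sethuraman theorem (Theorem \ref{abwthm}) to the conditional law of $S_n$ given $\WW$, for $\P$-almost every realization of the environment $(W_j)_{j \in \N}$, and then glue the conclusions together over $a \in J$. Conditionally on $\WW$ the variables $Z_j W_j$ are independent, so the normalized conditional log-MGF is
\[
\Psi_n^{\WW}(\vt) = \frac{1}{n} \sum_{j=1}^n f(\vt W_j),
\]
with first two derivatives given by the empirical averages of $W_j f'(\vt W_j)$ and $W_j^2 f''(\vt W_j)$. By the strong law of large numbers applied under conditions (iii) and (iv), I will obtain a.s.\ pointwise (in $\vt$) convergence of $\Psi_n^{\WW}$, $(\Psi_n^{\WW})'$, $(\Psi_n^{\WW})''$ to the deterministic limits $\Psi(\vt)=\E[f(\vt W_1)]$, $\Psi'(\vt)$, $\Psi''(\vt)$; the majorant $F$ from (iv) upgrades this to uniform convergence on compact subsets of $\R$ through an equicontinuity argument.

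Second, I verify the hypotheses of Theorem \ref{abwthm} for $\Psi_n^{\WW}$. Since $Z_1$ is non-degenerate by (i), $\Psi$ is strictly convex on $\R$, so for each $a \in J$ the equation $\Psi'(\vt) = a$ has a unique solution $\vt(a) \in (0, \vt_*)$; uniform convergence yields $\vt_n^{\WW}(a) \to \vt(a)$ a.s., whence $\vt_n^{\WW}(a) \in (0, \vt_{**})$ eventually for some $\vt_{**} \in (\vt(a),\vt_*)$, which is Assumption \ref{ass2}. Assumption \ref{ass1} follows by extending $f$ holomorphically to the complex disk $\{|\vt|<\vt_*\}$ and invoking (iii) together with Cauchy bounds to dominate $|\Psi_n^{\WW}|$ there. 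Among (\ref{1})--(\ref{3}), condition (\ref{1}) is immediate since $\vt_n^{\WW}(a) \to \vt(a) > 0$, and (\ref{2}) holds because $(\Psi_n^{\WW})''(\vt_n^{\WW}(a)) \to \E[W_1^2 f''(\vt(a) W_1)] > 0$ (using $f'' > 0$ from (i) and $W_1 \not\equiv 0$).

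The main obstacle is condition (\ref{3}), the Cram\'er-type decay of $\sqrt n \sup_t \prod_{j=1}^n \bigl|M(W_j \vt_n^{\WW}(a) + i W_j t)/M(W_j \vt_n^{\WW}(a))\bigr|$ for $\delta_1 \le |t| \le \delta_2 \vt_n^{\WW}(a)$. Each factor equals $|\chi_j(W_j t)|$, where $\chi_j$ is the characteristic function of $Z_1$ exponentially tilted at $\vt_n^{\WW}(a) W_j$. In case (i)(b) the tilted law inherits a density, so $|\chi_j(s)|<1$ for $s\neq 0$ with decay at infinity; since by LLN a positive fraction of $W_j$'s lies in a compact set bounded away from zero, the product will decay exponentially in $n$, dominating $\sqrt n$. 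In case (i)(a), $Z_1$ is lattice and $|\chi_j|$ attains $1$ on a countable set of arguments, so I use the absolutely continuous component of $W_1$ on $[c,d]$: for each admissible $t$, the set of $w\in[c,d]$ with $|\chi_j(wt)|=1$ is at most countable and hence null for the density-bounded-below component, so $\E[\log|\chi_j(W_1 t)|\,\1_{W_1\in[c,d]}]<0$; averaging over the LLN-many indices $j$ with $W_j\in[c,d]$ yields again exponential decay.

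Finally, to upgrade "for each fixed $a$, a.s." to "a.s., for all $a \in J$", I will fix a countable dense subset $A \subset J$, intersect the null sets over $a \in A$, and use monotonicity of $a \mapsto \P(S_n \ge an \mid \WW)$ combined with continuity of the deterministic rate function and local uniformity of the previous estimates on compact subsets of $J$. The local uniformity in $a$ comes from the uniform convergence of $(\Psi_n^{\WW})^{(k)}$ on compacta and from a uniform-in-$a$ version of the Cram\'er estimates above, and this uniformity is precisely what enables the quantifier exchange inside the probability in \eqref{representation}.
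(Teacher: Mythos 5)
Your overall strategy matches the paper's: verify the Chaganty--Sethuraman hypotheses for the conditional law $\P(\cdot\mid\WW)$ uniformly in $a\in J$ via the strong law of large numbers, a majorant-plus-compactness argument, and then invoke Theorem \ref{abwthm} $\omega$-wise. Your handling of Assumptions \ref{ass1}, \ref{ass2} and Conditions \eqref{1}, \eqref{2} is essentially the same as the paper's Lemmas \ref{plop.6} and \ref{plop.5}. The genuine gap is in Condition \eqref{3} in the lattice case. You propose to show $\E[\log|\chi(W_1 t)|\,\1_{\{W_1\in[c,d]\}}]<0$ for each fixed $t$ and conclude exponential decay of the product by the LLN; but Condition \eqref{3} requires control of the \emph{supremum} over the uncountable range $\delta_1\leq|t|\leq\delta_2\vt_n^\WW(a)$, and a pointwise-in-$t$ LLN does not deliver this: the exceptional set $\{w:|\chi(wt)|=1\}$ moves with $t$, so $\log|\chi(W_j t)|$ is not equicontinuous in $t$, and moreover $\chi_j$ depends on the random tilt $\vt_n^\WW(a)W_j$, which varies with both $n$ and $a$. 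The paper sidesteps both problems: it first replaces $\vt_n^\WW(a)$ by a deterministic supremum over $\vt\in[0,\vt_*]$, then bounds each factor by $(1-\bar\r)^{\1\{W_j\in[c,d]\setminus\cup_kI(k,t,\wt\d)\}}$ with explicit, $\vt$-independent exceptional intervals $I(k,t,\wt\d)$, and then runs a Borel--Cantelli estimate: a union bound over the at most $K$ intervals meeting $[c,d]$ together with the density lower bound $p$ shows that $\P\bigl(\inf_t\sum_j\1\{W_j\in[c,d]\setminus\cup_kI(k,t,\wt\d)\}<c_n\bigr)$ is summable in $n$. This combinatorial construction, not the LLN-on-log-ratios, is what controls the $\sup_t$ and is the step you still need to supply.

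Your proposed gluing step --- a countable dense $A\subset J$, intersecting null sets, and interpolating by monotonicity of $a\mapsto\P(S_n\geq an\mid\WW)$ --- is both a departure from the paper and left dangling. The paper never interpolates: every estimate is proved directly as an a.s.\ statement uniform over $\vt\in[0,\vt_*]$ (Lemmas \ref{plop.6}, \ref{plop.5} and the Borel--Cantelli bound all carry a $\sup_\vt$), so a single null set serves every $a\in J$ simultaneously. Your interpolation argument would still require controlling the relative error $o(1)$ in \eqref{representation} between neighbouring $a$'s, which is exactly the local uniformity you are hoping to deduce from it, so as stated it is circular rather than an alternative.
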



This theorem is proven in Section \ref{proofs1}.

\begin{remark}
The precise requirements on the distribution of  $W_1$ depend on the distribution of 
$Z_1$. In particular, Condition \eqref{expectations} does not in general require the moment 
generating function of $W_1$ to be finite for all $\vt\in \R$. Condition \eqref{majorant} 
looks technical. It is used to establish Condition \eqref{2} of Theorem \ref{abwthm} for all 
$a$ at the same time. For most applications, it is not very restrictive, see Section 
\ref{Satisfaction} for examples. 
\end{remark}

\subsection{Functional central limit theorem for the random rate function}

Note that the rate function  $I_n^{\WW}(a)$ is  \emph{random}. Even if we may expect 
that $I_n^{\WW}(a) \rightarrow \E\left[I^\WW_n(a)\right]$, almost surely, due to the fact 
that it is multiplied by $n$ in the exponent in  Equation \eqv(representation), its 
 fluctuations are relevant. To control them, we prove  a functional central limit theorem. 
 We introduce the following notation. 
\begin{equation}\Eq(plop.30)
 g(\vt)\equiv \E[f(W_1\vt)] \quad  \text{ and  } \quad X_n(\vt)\equiv \frac{1}{\sqrt n}\sum\limits_{j=1}^{n}\left( f(W_j\vt)-\E [f(W_j\vt)]\right).
\end{equation}
Moreover, define 
$\vt(a)$ as the solution of the equation $a=g'(\vt)$.

In addition to the assumptions made in Theorem \thv(LargeDev), we need the following  assumption on the covariance structure of the summands appearing in the definition of
$X_n(\vt)$ and their derivatives.

\begin{assumption} \label{Cov} There exists $C<\infty$, such that,
  for all $a, a' \in \bar J$, where $\bar J$ is the closure of the interval $J$,
\begin{align*} &\Cov\left(f(\vt(a)W_j),f(\vt(a')W_j)\right), \quad\Cov\left(W_jf'(\vt(a)W_j),W_jf'(\vt(a')W_j)\right), \nonumber \\
 &\Cov\left(W_j^2f''(\vt(a)W_j),W_j^2f''(\vt(a')W_j)\right), \quad \Cov\left(f(\vt(a)W_j),W_jf'(\vt(a')W_j)\right), \nonumber \\
 &\Cov\left(W_jf'(\vt(a)W_j),W_j^2f''(\vt(a')W_j)\right), \quad \Cov\left(f(\vt(a)W_j),W_j^2f''(\vt(a')W_j)\right) \text{ and } \nonumber \\
&\Va\left[W_j^3f'''(\vt(a)W_j)\right]
\end{align*}
are all smaller than $C$.
\end{assumption}

  \begin{theorem}\label{FCLT}
If  $g''(\vt(a))>c$ for some $c>0$ and Assumption \ref{Cov} is satisfied, then the rate function satisfies
 \begin{equation}\label{convrescaled}
 I^\WW_n(a)=I(a)+ n^{-1/2} X_n(\vt(a)) +n^{-1} r_n(a),
 \ee
 where 
 \be\Eq(det.1)
 I(a)\equiv a\vt(a)-g(\vt(a)),
 \ee
 \be\Eq(fluct.1)
 (X_n(\vartheta(a)))_{a\in \bar J}\,{\stackrel {\DD} {\to}}\,(X_a)_{a\in \bar J},\quad \hbox{\rm as}
 \, n\to \infty,
 \ee
 where $X$ is the Gaussian process with mean zero and covariance
 \begin{equation}
\Cov (X_a,X_{a'}) =  \E[f(W_1\vt(a))f(W_1\vt(a'))]-\E[f(W_1\vt(a))]\E[f(W_1\vt(a'))],
\end{equation}
and 
\begin{equation}
r_n(a) = \frac{ (X_n'(\vt(a)))^2}{2\left[g''(\vt(a))+\frac 1{\sqrt n}X_n''(\vt(a))\right]}+\po\left( 1\right),
\end{equation} 
 uniformly in $a\in \bar J$.
\end{theorem}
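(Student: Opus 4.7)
The strategy is a perturbative Taylor expansion of the random saddle-point equation around its deterministic counterpart, combined with a functional CLT for the fluctuation process $X_n$. Write $g_n(\vt) := \tfrac{1}{n}\sum_{j=1}^n f(W_j\vt) = g(\vt) + n^{-1/2} X_n(\vt)$, so that $I_n^{\WW}(a) = a\vt_n^{\WW}(a) - g_n(\vt_n^{\WW}(a))$ with $g_n'(\vt_n^{\WW}(a)) = a$, to be compared with $I(a) = a\vt(a) - g(\vt(a))$ and $g'(\vt(a)) = a$.

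\textbf{Expansion of the random saddle point and of $I_n^\WW(a)$.} Set $\Delta_n(a) := \vt_n^{\WW}(a) - \vt(a)$. A first-order Taylor expansion of $g_n' = g' + n^{-1/2} X_n'$ at $\vt(a)$ together with $g'(\vt(a)) = a$ gives $[g''(\vt(a)) + n^{-1/2} X_n''(\vt(a))]\Delta_n(a) = -n^{-1/2} X_n'(\vt(a)) + O(\Delta_n(a)^2)$. Since $g''(\vt(a)) > c$, this inverts to
\begin{equation*}
\Delta_n(a) = -\frac{n^{-1/2} X_n'(\vt(a))}{g''(\vt(a)) + n^{-1/2} X_n''(\vt(a))} + O(n^{-1}), \qquad \Delta_n(a) = O_\P(n^{-1/2}).
\end{equation*}
Substituting $\vt_n^{\WW}(a) = \vt(a) + \Delta_n(a)$ into $I_n^{\WW}(a)$ and Taylor-expanding both $g$ and $X_n$ around $\vt(a)$ to second order, the terms linear in $\Delta_n$ cancel by the stationarity relation $g'(\vt(a)) = a$. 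Plugging the explicit formula for $\Delta_n(a)$ into the remaining quadratic terms yields precisely the decomposition \eqv(convrescaled) with the claimed form of $r_n(a)$.

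\textbf{Functional CLT for $X_n\circ \vt$.} Because $g''(\vt(a)) > c$, the implicit function theorem makes $a \mapsto \vt(a)$ a $C^2$-diffeomorphism of $\bar J$ onto a compact subinterval of $(0, \vt_{**})$, so it suffices to prove the FCLT for $\vt \mapsto X_n(\vt)$ there. Finite-dimensional convergence is the multivariate CLT applied to the i.i.d.\ vectors $(f(W_j\vt(a_i)) - \E f(W_j\vt(a_i)))_{i \leq k}$, whose limit covariance matches the one in the statement. Tightness in $C(\bar J)$ follows from a Kolmogorov-type criterion: by the mean value theorem $|f(W_j\vt) - f(W_j\vt')| \leq |\vt - \vt'|\sup_\eta |W_j f'(W_j\eta)|$, and squaring and taking expectations while invoking the uniform covariance bound of Assumption \ref{Cov} on $W_j f'(W_j\vt)$ gives $\E[(X_n(\vt) - X_n(\vt'))^2] \leq C|\vt - \vt'|^2$ uniformly in $n$.

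\textbf{Main obstacle: uniformity of the remainder in $a \in \bar J$.} The delicate point is making the Taylor expansion uniform over $a$. This requires (i) joint tightness in $C(\bar J)$ of the derivative processes $a \mapsto X_n'(\vt(a))$ and $a \mapsto X_n''(\vt(a))$, proved by the same Kolmogorov argument applied to the sums of $W_j f'(W_j\vt)$ and $W_j^2 f''(W_j\vt)$, whose covariances are controlled by Assumption \ref{Cov}; (ii) the ensuing lower bound $g''(\vt(a)) + n^{-1/2} X_n''(\vt(a)) \geq c/2$ on an event of probability $1 - \so$, which legitimises the inversion carried out above; (iii) uniform control of the Lagrange cubic remainder $O(\Delta_n(a)^3)$, for which one needs $\sup_{a\in\bar J}|X_n'''(\vt(a))| = O_\P(1)$, a consequence of the variance bound on $W_j^3 f'''(\vt(a) W_j)$ in Assumption \ref{Cov} combined with a chaining argument. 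Together these imply that the cubic Taylor error is $O_\P(n^{-3/2})$ uniformly in $a \in \bar J$, so that multiplying by $n$ yields $\po(1)$ uniformly, establishing the stated asymptotic form of $r_n(a)$.
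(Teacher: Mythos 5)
Your proposal follows essentially the same route as the paper: Taylor-expand $a\vt - g_n(\vt)$ around the deterministic saddle $\vt(a)$ (so linear terms drop by $g'(\vt(a))=a$), solve the first-order expansion of the saddle-point equation for $\Delta_n(a)$, and prove joint weak convergence in $C(\bar J,\R^3)$ of $(X_n,X_n',X_n'')\circ\vt$ via multivariate CLT for finite-dimensional marginals plus a Kolmogorov--Chentsov tightness bound obtained from Assumption \ref{Cov}. Your step (iii) makes the uniform control of the cubic remainder more explicit than the paper's $\po((\d^n)^2)$ shorthand; note also that the paper's own derivation (equation \eqref{final}) in fact produces $I(a)-n^{-1/2}X_n(\vt(a))+n^{-1}r_n(a)$, so the $+$ sign in the theorem statement is a harmless typo (the limit Gaussian process is centered and symmetric).
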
%

To prove Theorem \ref{FCLT} we show actually more, namely that  the process
\be
(X_n(\vt(a)), X_n'(\vt(a)), X_n''(\vt(a)))_{a\in \bar J} \,{\stackrel {\DD} {\to}}\,
 (X_a,X^{'}_a,X^{''}_a)_{a\in \bar J},
 \ee
  (see Lemma \ref{JWC} below). The proof of the theorem is given in Section \ref{proofs2}.

\subsection{Examples}\label{Satisfaction}
In the following we list some examples in which the conditions of the preceding 
theorems are satisfied.
\begin{enumerate}
\item Let  $Z_1$ be a Gaussian random variable
 with mean zero and variance $\s^2$. In this case, \begin{eqnarray}
	f(\vt)&=&\log(\E[\exp(\vt Z_1)])=\frac 12 \s^2\vt^2, \quad f'(\vt)=\s^2\vt \\
	f''(\vt)&=& \s^2, \quad \text{ and } \quad f'''(\vt)=0
\end{eqnarray}
This implies that  $W_1$ must  have finite fourth moments to satisfy Assumption 
\ref{Cov}. Under this requirement Conditions \eqref{expectations} and \eqref{majorant} 
of Theorem \ref{LargeDev} are met. According to Condition \eqref{Nolattice2} of 
Theorem \ref{LargeDev}, $W_1$ may not be concentrated at $0$. Moreover,
\begin{equation}
	g''(\vt)=\s^2 > c
\end{equation}
independent of the distribution of $W_1$.
\item Let $Z_1$ be a binomially distributed random variable, $Z_1 \sim B(m, p)$. Thus
\begin{eqnarray}
	f(\vt)&=& m \log(1-p+pe^{\vt})\\
	f'(\vt)&=&m\frac{p e^{\vt}}{1-p+pe{\vt}}\leq m \\
	f''(\vt)&=&m(p-p^2)\frac{e^{\vt}}{(1-p+pe^{\vt})^2} \leq f''\left(\log\left(\frac{3p-1}{p}
	\right)\right) \\
	f'''(\vt)&=&m(p-p^2)e^{\vt}\frac{1-3p+pe^{\vt}}{(1-p+pe^{\vt})^3} \in C_0.
\end{eqnarray}
Then $W_1$ has to satisfy \eqref{Nolattice} of Theorem \ref{LargeDev} and must
 have finite sixth moments. 
One can show that $f'(\vt), f''(\vt)$ and  $f'''(\vt)$ are bounded, $\E[f(\vt W_1)]$ and the 
moments depending on $f(\vt W_1)$ in Assumption \ref{Cov} are finite.
Furthermore, the assumption $0<\E[W_1^2]<\infty$ implies that $g(\vt(a))>c$ as 
required in Theorem \ref{FCLT}.
\end{enumerate}

\begin{remark}
In both cases it is not necessary that the moment generating function of $W_1$ exists.
\end{remark}

\subsection{Related results} 

After posting our manuscript on arXiv, Ioannis Kontoyiannis informed us about the
 papers
 \cite{DemKon1999} and \cite{DemKon2002}  by Dembo and Kontoyiannis, where
 some similar results on conditional large deviations are obtained. 
 They concern sums of the form 
\be\label{DK3}
\r_n\equiv \frac 1n \sum_{j=1}^n\r(W_j,Z_j), 		
\ee
where   ${\bf W}=(W_j)_{j\in\N}$ and ${\bf Z} =(Z_j)_{j\in\N}$ are two stationary 
processes with $W_j$ and $Z_j$ taking values in some Polish spaces $A_W$ and
 $A_Y$, respectively, and $\r: A_W\times A_Z \to [0,\infty)$ is some 
 measurable function. Their main motivation is to estimate 
 the frequency with which subsequences of length $n$ in the process $Z$ occur that 
 are "close" to $W$.  To do this, they estimate conditional probabilities of the form 
 \be
 \P\left(\r_n\leq D|\WW\right),
\ee
obtaining, under suitable assumptions,  refined large deviation estimates of the form 
\be
\frac 1n \ln   \P\left(\r_n\leq D|\WW\right) = R_n(D) + \frac 1{\sqrt n} \L_n(D) +o(1/\sqrt n),
\ee 
almost surely, 
where they show that $R_n(D) $ converges a.s. while $\L_n(D)$ converges in 
distribution to a Gaussian random variable.

\section{Applications}\label{applications}
\subsection{Stochastic model of T-cell activation} 
The  immune system  defends the body against dangerous intrusion, e.g. bacteria, 
viruses and cancer cells. The interaction of so-called T-cells and antigen presenting cells 
plays an important r\^ole in performing this task. Van den Berg, Rand and Burroughs 
developed a stochastic model of T-cell activation in \cite{BRB} which was further 
investigated by Zint, Baake and den Hollander in \cite{BZH} and Mayer and Bovier in  \cite{MB}. 
Let us briefly explain this model. 

The antigen presenting cells display on their surface a mixture of peptides present in the 
body. During a bond between a T-cell and a presenting cell the T-cell scans the 
presented mixture of peptides. The  T-cell is stimulated during this process, and if the 
sum of all stimuli exceeds a threshold value, the cell becomes activated and triggers
 an immune response. The signal received by the T-cell is represented by
\begin{equation}
	S_n\equiv \sum_{j=1}^nZ_jW_j+z_{\mathsf f}W_{\mathsf f},
\end{equation}
where $W_j$ represents the stimulation rate elicited by a peptide of type $j$ and $Z_j$ 
represents the random number of presented peptides of type $j$. The sum describes 
the signal due to self peptides, $z_{\mathsf f}W_{\mathsf f}$ is the signal due to one 
foreign peptide type. From the biological point of view, T-cell activations are rare events 
and thus large deviation theory is called for 
to investigate $\P(S_n\geq na|\YY)$, where $\YY$ is a $\sigma$-field such 
 that $W_j$ are measurable with respect to $\YY$ and $Z_j$ are independent of $\YY$. 
 For two examples of distributions discussed in \cite{BZH},  Theorems \ref{LargeDev}  
 and \ref{FCLT} can be applied. In both examples, the 
 random variables $Z_j$ are  binomially distributed,  and thus their 
 moment generating function exists everywhere. $W_j$ is defined by $W_j \equiv \frac 
 {1}{\tau_j}\exp(-\frac{1}{\tau_j})$, where  $\tau_j$ are exponentially distributed or 
 logarithmic normally distributed, i.e. $W_j$ are bounded and the required moments 
 exist. Furthermore, $W_1$ has a density and Condition \eqref{Nolattice} of Theorem 
 \ref{LargeDev} is met. Using Theorems \ref{LargeDev} and \ref{FCLT}, one can prove 
 that the probability of T-cell activation for a given type of T-cell grows exponentially with 
 the number of presented foreign peptides, $z_{\mathsf f}$, if the corresponding 
 stimulation rate $W_{\mathsf f}$ is sufficiently large. It is then argued that a suitable 
 activation threshold can be set that allows significantly differentiate between the 
 presence or absence of foreign peptides. 
For more details see  \cite{MB}.

\subsection{Large portfolio losses} Dembo, Deuschel, and Duffie investigate in  
\cite{DDD} the probability of large financial losses on a bank portfolio or the total claims 
against an insurer conditioned on a macro environment. The random variable $S_n$ 
represents the total loss on a portfolio consisting of many positions, $W_j$ is a $\{0,1\}$-valued random variable and indicates if 
position $j$ experiences a loss, whereas the random variable $Z_j$ is for example exponentially 
distributed and represents the amount of loss. They consider the probability conditioned 
on a common macro environment $\YY$ and assume that $Z_1, W_1, \dots, Z_n, W_n$ 
are conditionally independent. Furthermore, they work in the slightly generalised setup 
of finitely many blocks of different distributions. That is
\begin{equation}
S_n\equiv\sum_{\alpha=1}^K\sum_{j=1}^{Q_{\alpha}}Z_{\alpha, j}W_{\alpha, j},
\end{equation}
where $Z_{\alpha,j}\stackrel{\DD}{=}Z_{\alpha}$ and $W_{\alpha, j}\stackrel{\DD}{=}
W_{\alpha}$ for each $\alpha \in \{1,\dots,K\}$ and $\sum_{\alpha=1}^K Q_{\a}=n$. Moreover, the conditional probability of losses for each position is calculated and the influence of the length of the time interval, in which the loss occurs, is investigated. For more details see \cite{DDD}.

\begin{remark}
In general, the exponential distribution for $Z_1$ causes problems because the moment 
generating function does not exist everywhere. Evaluating at $\vt W_j$ thus might yield 
to an infinite term depending on the range of $W_j$. In this application there is no 
problem because $W_j$ is $\{0,1\}$-valued. 
\end{remark}

\section{Proof of Theorem \thv(LargeDev)}\label{proofs1}

\begin{proof}[Proof of Theorem \ref{LargeDev}]
We prove Theorem \thv(LargeDev) by showing that the conditional law 
of $S_n$ given $\WW$ satisfies the assumptions of Theorem 
\thv(abwthm) uniformly in $a\in J$, almost surely.

Assumption \ref{ass1} is satisfied due to Conditions \eqref{momgenZ} and \eqref{expectations} of Theorem \ref{LargeDev}: For each $n\in \N$ and each realisation of $(W_j)_{j\in\N}$ $\Psi_n^{\WW}(\vt)$ is a convex function. Furthermore, 
\begin{equation}\label{max1}
\Psi_n^{\WW}(\vt)\leq \max\{\Psi_n^{\WW}(\vt_*),\Psi_n^{\WW}(-\vt_*)\}  
\end{equation}
and 
\begin{equation}\label{max2}
 \lim_{n\to\infty} \max\{\Psi_n^{\WW}(\vt_*),\Psi_n^{\WW}(-\vt_*)\} =
  \max\{\E \left[f(W_1\vt_*)\right],\E\left[f(-W_1\vt_*)\right)]\}, \quad\hbox{\rm a.s.}
\end{equation}
This implies that  Assumption \ref{ass1} is satisfied. 
To prove that  Assumption \ref{ass2} holds,  note that, by the law of large numbers,
\begin{equation}\Eq(plup.1)
\lim_{n\to\infty} \frac{d}{d\vt}\Psi_n^{\WW}(0)= 
\lim_{n\to\infty} \frac 1n\E^{\WW}[S_n]= \E[W_1]\E[Z_1], \quad\hbox{\rm a.s.} 
\end{equation}
Next, by convexity, and again the law of large numbers
\begin{equation}\Eq(plup.2)
\liminf_{n\to\infty}\sup_{\vt\in [0,\vt_{*}]}\frac{d}{d\vt}\Psi_n^{\WW}(\vt) 
= \liminf_{n\to\infty} \frac{d}{d\vt}\Psi_n^{\WW}(\vt_*) = \E[W_1f'(\vt_*W_1)] , \quad\hbox{\rm a.s.} 
\end{equation}
Recall that $\vt_n^{\WW}(a)$ is defined as the solution of 
\begin{equation}\label{Thetadet}
a=\frac 1n\sum_{j=1}^{n}\frac d{d\vt}\log M(W_j \vt) = \frac 1n \sum_{j=1}^n W_j f'(\vt W_j) . 
\end{equation}
For $n$ large enough, the solution $\vt_n^{\WW}(a)$ exists for  $a\in J$ and is unique since  the logarithmic moment generating function $\Psi_n^{\WW}$ is strictly convex. Again by monotonicity of $\frac{d}{d\vt}\Psi_n^{\WW}(\vt)$ in $\vt$, and 
because of \eqv(plup.1) and \eqv(plup.2),  for $a\in J$,  $\vt_n^\WW(a)\in (0,\vt_*)$, 
almost surely, for $n$ large enough.
Thus Assumption \ref{ass2} is satisfied.

In order to establish Condition \eqref{1} of Theorem \ref{abwthm} we prove the following
\begin{lemma} \TH(plop.6)
$\P\big(\forall a\in J: \lim_{n \to\infty}\vt_n^{\WW}(a)=\vt(a)\big)=1$.
\end{lemma}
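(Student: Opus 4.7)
The plan is to establish uniform (in $\vt$) almost sure convergence of $\frac{d}{d\vt}\Psi_n^{\WW}(\vt)$ to $g'(\vt)$ on $[0,\vt_*]$, and then invert. Recall that
\begin{equation*}
\tfrac{d}{d\vt}\Psi_n^{\WW}(\vt) = \tfrac{1}{n}\sum_{j=1}^n W_j f'(\vt W_j),
\qquad
\tfrac{d^2}{d\vt^2}\Psi_n^{\WW}(\vt) = \tfrac{1}{n}\sum_{j=1}^n W_j^2 f''(\vt W_j),
\end{equation*}
and by Condition \eqref{majorant} the second of these is dominated by $\frac{1}{n}\sum_{j=1}^n F(W_j)$.

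First I would fix a countable dense set $D \subset [0,\vt_*]$. For each $\vt \in D$, Condition \eqref{expectations} ensures $\E[W_1 f'(\vt W_1)]$ is finite, so by the strong law of large numbers $\frac{d}{d\vt}\Psi_n^{\WW}(\vt) \to g'(\vt)$ almost surely. Intersecting these countably many full-measure events yields an event $\Omega_0$ of probability one on which convergence holds simultaneously for every $\vt \in D$. On the same event, by another application of the SLLN using Condition \eqref{majorant}, $\frac{1}{n}\sum_{j=1}^n F(W_j) \to \E[F(W_1)] < \infty$, so there is a (random, a.s. finite) constant $L$ such that $\frac{d^2}{d\vt^2}\Psi_n^{\WW}(\vt) \leq L$ uniformly in $\vt \in [0,\vt_*]$ for all large $n$. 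In particular $\frac{d}{d\vt}\Psi_n^{\WW}$ is $L$-Lipschitz on $[0,\vt_*]$ for all large $n$, and $g'$ is likewise Lipschitz with constant $\E[F(W_1)]$. A standard approximation argument (pointwise convergence on a dense set plus equicontinuity) then upgrades this to uniform convergence:
\begin{equation*}
\sup_{\vt \in [0,\vt_{**}]} \left|\tfrac{d}{d\vt}\Psi_n^{\WW}(\vt) - g'(\vt)\right| \xrightarrow[n\to\infty]{} 0 \qquad \text{on } \Omega_0.
\end{equation*}

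Finally I would invert. On $\Omega_0$, the map $g': [0,\vt_*] \to \R$ is strictly increasing (since $g''(\vt)=\E[W_1^2 f''(\vt W_1)]>0$ because $f$ is strictly convex and $\P(W_1 \neq 0)>0$), hence its inverse $a \mapsto \vt(a)$ is continuous on the image, which contains $\bar J$ by \eqv(plup.1)--\eqv(plup.2). Given $\e>0$ and $a \in \bar J$, set $\delta \equiv \min\{g'(\vt(a)) - g'(\vt(a)-\e),\, g'(\vt(a)+\e) - g'(\vt(a))\}>0$. Once $n$ is large enough that the uniform deviation above is below $\delta/2$, the defining equation $a = \frac{d}{d\vt}\Psi_n^{\WW}(\vt_n^{\WW}(a))$ forces $|\vt_n^{\WW}(a) - \vt(a)| < \e$. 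A compactness/covering argument on $\bar J$ makes this uniform, so $\sup_{a \in \bar J}|\vt_n^{\WW}(a)-\vt(a)| \to 0$ on $\Omega_0$, giving the claim.

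The only mildly delicate point is ensuring that the null set on which SLLN fails does not depend on $a$; this is why I work with a countable dense set of $\vt$'s and transfer uniformity back through the (deterministically continuous) inverse of $g'$, rather than attempting to apply SLLN directly to an uncountable family indexed by $a$.
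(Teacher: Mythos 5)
Your proof is correct, but it takes a genuinely different route to the uniform law of large numbers that lies at the heart of the argument. The paper also reduces the claim to showing $\sup_{\vt\in[0,\vt_*]}\left|\tfrac1n\sum_j W_jf'(\vt W_j)-g'(\vt)\right|\to 0$ a.s. and then inverts via the continuity and strict monotonicity of $g'$, but it establishes the uniform SLLN in a different way: it exploits the \emph{monotonicity} of $\vt\mapsto W_1f'(\vt W_1)$ to get the integrable envelope $W_1f'(\vt_* W_1)$, uses dominated convergence to make the oscillation over small balls $B_\d(\vt)$ small in expectation, covers $[0,\vt_*]$ by finitely many such balls, and applies the ordinary SLLN ball by ball. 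Your argument instead gets the uniform SLLN from pointwise convergence on a countable dense set of $\vt$'s plus equi-Lipschitz continuity of $\vt\mapsto\tfrac{d}{d\vt}\Psi_n^\WW(\vt)$, the latter obtained by bounding $\tfrac{d^2}{d\vt^2}\Psi_n^\WW(\vt)\le\tfrac1n\sum_jF(W_j)$ via Condition \eqref{majorant} and applying the SLLN once more to $F(W_j)$. Your route is cleaner and more unified (the same equicontinuity scheme would handle the second-derivative version needed in the paper's Lemma \thv(plop.5), given a third-derivative majorant), but it consumes Condition \eqref{majorant} which the paper manages to avoid in this particular lemma by leaning on monotonicity; since Condition \eqref{majorant} is assumed anyway, nothing is lost. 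You also make explicit the $\e$--$\d$ inversion step that the paper compresses into the phrase ``continuous and monotone increasing,'' and you correctly note that the uniform control in $\vt$ transfers to uniform control in $a$ through the uniformly continuous inverse $(g')^{-1}$ on the compact $\bar J$, a slightly stronger statement than the lemma asserts but one the paper in fact uses later.
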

\begin{proof} 
First, using that  $g'(\vt)$ is continuous and monotone increasing
\begin{eqnarray}\nonumber
	&&\P\left(\forall a \in J: \lim_{n\to\infty}|\vt_n^{\WW}(a)-\vt(a)|=0\right) \\\nonumber
	&&= \P\Bigg(\forall a \in J: \lim_{n\to\infty}\Big|g'(\vt_n^{\WW}(a))-\frac1n\sum_{j=1}^nW_jf'(\vt_n^{\WW}(a)W_j) \\\nonumber
	&&\qquad\qquad-g'(\vt(a))+ \frac1n\sum_{j=1}^nW_jf'(\vt_n^{\WW}(a)W_j)\Big|=0\Bigg)\\
&&=	 \P\left(\forall a \in J: \lim_{n\to\infty}\Big|g'(\vt_n^{\WW}(a))-\frac1n\sum_{j=1}^nW_jf'(\vt_n^{\WW}(a)W_j) \Big|=0\right),\label{uni1}
\end{eqnarray}	
where we used that,
by definition of $\vt(a)$ and $\vt_n(a)$, 
\begin{equation} 
	\frac1n\sum_{j=1}^nW_jf'(\vt_n^{\WW}(a)W_j)=g'(\vt(a))=a.
\end{equation}
Since we have seen that for $a\in J$, $\vt(a)\in [0, \vt_*]$ and, for  $n$ large enough,
 $\vt_n^\WW(a)\in [0,\vt_*]$, 
the last line in   \eqref{uni1} is bounded from below by
\be\Eq(plop.3)
\P\left(\sup_{\vt\in[0,\vt_*]}\lim_{n\to\infty}\left|\frac1n \sum_{j=1}^nW_jf'(\vt W_j)- \E[W_1f'(\vt W_1)]\right|=0\right).
\ee
The following facts are true:
\begin{enumerate}
\item \label{int} By Condition \eqref{expectations} of Theorem \ref{LargeDev} $W_1f'(\vt W_1)$ is integrable, for each $\vt\in [0,\vt_*]$. 
\item $W_1(\o)f'(\vt W_1(\o))$ is a continuous function of $\vt$, $\forall \o\in \O$.  \label{continuity}
\item $W_1f'(\vt W_1)$ is monotone increasing in $\vt$ since $\frac{d}{d\vt}(W_1f'(\vt W_1))>0$. \label{increasing}
\item \label{plop.4}  \eqref{int}, \eqref{continuity}, and \eqref{increasing} imply, by dominated convergence, that, for all $\vt\in [0,\vt_*]$, 
\begin{equation}
\lim_{\d\downarrow 0}\E\left[\sup_{\bar{\vt}\in B_{\d}(\vt)}W_1f'(\bar{\vt} W_1)- \inf_{\bar{\vt}\in B_{\d}(\vt)}W_1f'(\bar{\vt} W_1)\right] = 0. 
\end{equation}
\end{enumerate}
Note that \ref{plop.4} implies that, for all $\vt\in [0,\vt_*]$ and for all  $\e>0$,  
there exists a $\d=\d(\e,\vt)$, such that 
\be\label{conv}
\E\left[\sup_{\bar{\vt}\in B_{\d(\e,\vt)}(\vt)}W_1f'(\bar{\vt} W_1)- \inf_{\bar{\vt}\in B_{\d(\e,\vt)}(\vt)}W_1f'(\bar{\vt} W_1)\right] <\e.
\ee
The collection  $\{B_{\d(\e,\vt)}(\vt)\}_{\vt\in[0,\vt_*]}$ is an open cover of $[0,\vt_*]$,
 and since $[0,\vt_*]$ is compact we can choose a finite subcover, $\{B_{\d(\e,\vt_k)}(\vt_k)\}_{1\leq k \leq K}$. Therefore
\begin{eqnarray}\label{uni2}
	&&\sup_{\vt\in[0,\vt_*]}\left\{\left|\frac1n \sum_{j=1}^nW_jf'(\vt W_j)- \E[W_1f'(\vt W_1)]\right|\right\} \\\nonumber
	&&= \max_{1\leq k\leq K}\sup_{\vt\in B_{\d(\e,\vt_k)}(\vt_k)}\left\{\left|\frac1n \sum_{j=1}^nW_jf'(\vt W_j)- \E[W_1f'(\vt W_1)]\right|\right\} \\\nonumber
	&&\leq  \max_{1\leq k\leq K}\left\{\left|\frac1n \sum_{j=1}^n \sup_{\vt\in B_{\d(\e,\vt_k)}(\vt_k)} W_jf'(\vt W_j)-  \inf_{\vt\in B_{\d(\e,\vt_k)}(\vt_k)} \E[W_1f'(\vt W_1)]\right|\right\}.
\end{eqnarray}
Since $\sup_{\vt\in B_{\d(\e,\vt_k)}(\vt_k)} W_jf'(\vt W_j)\leq W_jf'(\vt_*W_j)$ is integrable, the strong law of large numbers applies and \eqref{uni2} converges almost surely to
\begin{equation}
	 \max_{1\leq k\leq K}\left\{\left|\E\left[\sup_{\vt\in B_{\d(\e,\vt_k)}(\vt_k)} W_1f'(\vt W_1)\right] -  \inf_{\vt\in B_{\d(\e,\vt_k)}(\vt_k)} \E[W_1f'(\vt W_1)]\right|\right\},
\end{equation}
which in turn, due to  \eqref{conv},  is bounded from above by
\begin{equation}
	\max_{1\leq k\leq K}\left\{\left| \E\left[\sup_{\vt\in B_{\d(\e,\vt_k)}(\vt_k)} W_1f'(\vt W_1) -  \inf_{\vt\in B_{\d(\e,\vt_k)}(\vt_k)} W_1f'(\vt W_1)\right]\right|\right\} < \e.
\end{equation} 
Thus, $\vt_n^{\WW}(a)$ converges almost surely to $\vt(a)$. 
\end{proof}
But for  $a\in J$, we know that  $\vt(a)>0$, and since $\vt^\WW_n(a)$ converges to $
\vt(a)$, a.s., a fortiori, Condition \eqref{1} of Theorem \ref{abwthm} is  satisfied, a.s.

Next we show that  Condition \eqref{2} of Theorem \ref{abwthm}  is also satisfied, 
almost surely. 
 To see this, write  
\begin{eqnarray}\nonumber
	&&\left.\left( \frac{d^2}{d \vt ^2}\Psi_n^{\WW}(\vt)\right) \right|_{\vt=\vt_n^{\WW}(a)}\\
	\nonumber
	&&=\frac{1}{n}\sum_{j=1}^n\tfrac{\E[W_j^2Z_j^2e^{\vt_n^{\WW}(a)W_jZ_j}|\WW]
	\E[e^{\vt_n^{\WW}(a)W_jZ_j}|\WW]-\left(\E[W_jZ_je^{\vt_n^{\WW}(a)W_jZ_j}|\WW]
\right)^2}{(\E[e^{\vt_n^{\WW}(a)W_jZ_j}|\WW])^2}\\
	&&= \frac 1n\sum_{j=1}^n\Va ^{\vt_n^{\WW}(a)}[W_jZ_1|\WW].
\end{eqnarray}
The conditional variance $\Va ^{\vt_n^{\WW}(a)}[W_jZ_j|\WW]$ is clearly positive with positive probability, since we assumed the distribution of $Z_1$  to be non-degenerate and 
$W_j$ is non-zero with positive probability. We need to 
show that also the  infimum over $n\in \N$ is  strictly positive. Note that 
\begin{equation}
	\Psi''(\vt(a))=\E[\Va^{\vt(a)}[W_1Z_1|\WW]]>0.
\end{equation}
We need the following lemma.
\begin{lemma}\TH(plop.5)
	$\P\left(\forall a\in J :\lim_{n \to \infty }\Psi_n''(\vt_n^{\WW}(a))=\Psi''(\vt(a))\right)=1$.
\end{lemma}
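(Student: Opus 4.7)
The plan is to decompose the error along the fixed and random evaluation points,
\[
\bigl|\Psi_n''(\vt_n^{\WW}(a)) - \Psi''(\vt(a))\bigr| \leq \bigl|\Psi_n''(\vt_n^{\WW}(a)) - \Psi_n''(\vt(a))\bigr| + \bigl|\Psi_n''(\vt(a)) - \Psi''(\vt(a))\bigr|,
\]
and treat each piece by imitating the compactness argument used in Lemma~\ref{plop.6}. Note that $\Psi_n''(\vt) = \frac{1}{n}\sum_{j=1}^n W_j^2 f''(\vt W_j)$ and $\Psi''(\vt) = g''(\vt) = \E[W_1^2 f''(\vt W_1)]$.

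For the second, non-random-argument term I would use Condition~\eqref{majorant} of Theorem~\ref{LargeDev}: since $W_1^2 f''(\vt W_1) \leq F(W_1)$ is dominated by an integrable function, dominated convergence gives that for every $\vt \in [0,\vt_*]$ and every $\e > 0$ there exists $\d(\e,\vt) > 0$ with
\[
\E\!\left[\sup_{\bar\vt \in B_{\d(\e,\vt)}(\vt)} W_1^2 f''(\bar\vt W_1) - \inf_{\bar\vt \in B_{\d(\e,\vt)}(\vt)} W_1^2 f''(\bar\vt W_1)\right] < \e.
\]
Extracting a finite subcover $\{B_{\d(\e,\vt_k)}(\vt_k)\}_{1\leq k\leq K}$ of the compact interval $[0,\vt_*]$ and applying the strong law of large numbers to each supremum and infimum (both integrable by the same majorant) exactly as in \eqref{uni2}, I obtain $\sup_{\vt \in [0,\vt_*]} |\Psi_n''(\vt) - \Psi''(\vt)| \to 0$ almost surely, which in particular gives uniform convergence in $a$ for $\vt(a)$ ranging over the compact image of $\bar J$.

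For the first term I invoke Lemma~\ref{plop.6}: $\vt_n^{\WW}(a) \to \vt(a)$ a.s.\ for each $a \in J$. Since both $a \mapsto \vt_n^{\WW}(a)$ and $a \mapsto \vt(a)$ are monotone and $\vt$ is continuous on $\bar J$, Dini's theorem promotes this to almost sure uniform convergence on compact sub-intervals. Hence for any $\e > 0$ and all $n$ large enough, $\vt_n^{\WW}(a)$ and $\vt(a)$ lie together (uniformly in $a$) in a common ball $B_{\d(\e,\vt_k)}(\vt_k)$ from the finite cover above, so
\[
\bigl|\Psi_n''(\vt_n^{\WW}(a)) - \Psi_n''(\vt(a))\bigr| \leq \frac{1}{n}\sum_{j=1}^n \Bigl(\sup_{\bar\vt \in B} W_j^2 f''(\bar\vt W_j) - \inf_{\bar\vt \in B} W_j^2 f''(\bar\vt W_j)\Bigr),
\]
and the strong law of large numbers bounds the right-hand side by $\e$ a.s.\ in the limit. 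Taking $\e \downarrow 0$ along a countable sequence finishes the proof.

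The main obstacle is entangling the two uniformities — in the parameter $a$ and in the random evaluation point $\vt_n^{\WW}(a)$ — simultaneously and almost surely. Condition~\eqref{majorant} of Theorem~\ref{LargeDev} is precisely what allows both (a) dominated convergence to produce the small-oscillation balls and (b) the strong law of large numbers to act on $\sup$ and $\inf$ over those balls; without it the compactness-plus-SLLN strategy would not close.
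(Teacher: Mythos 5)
Your proof is correct and uses the same ingredients as the paper's (Condition \eqref{majorant} for the integrable dominator, compactness plus a finite subcover, dominated convergence, the strong law of large numbers, and the uniform a.s.\ convergence $\vt_n^{\WW}(a)\to\vt(a)$ coming from Lemma~\ref{plop.6}), but you split the triangle inequality at a different intermediate point. The paper adds and subtracts $\Psi''(\vt_n^{\WW}(a))$ — the \emph{deterministic} limit function evaluated at the random argument — so its cross-term $|\Psi''(\vt_n^{\WW}(a))-\Psi''(\vt(a))|$ is handled by the free uniform continuity of $\Psi''$ on the compact interval $[0,\vt_*]$ together with the uniform convergence of $\vt_n^{\WW}(a)$. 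You instead add and subtract $\Psi_n''(\vt(a))$, so your cross-term $|\Psi_n''(\vt_n^{\WW}(a))-\Psi_n''(\vt(a))|$ involves the oscillation of the \emph{random} function $\Psi_n''$; you control it by an extra pass of the SLLN over the small-oscillation balls, which in effect establishes a.s.\ equicontinuity of the family $\{\Psi_n''\}_{n}$. Both arguments close, but the paper's is slightly more economical, since continuity of the deterministic $\Psi''$ costs nothing and no additional probabilistic estimate is needed for that term; your route duplicates the SLLN work already done for the other summand. (Note also that the uniform convergence you derive via the Dini/P\'olya monotonicity argument is the same uniformity the paper obtains directly from the proof of Lemma~\ref{plop.6}, via the uniform SLLN bound on $\sup_{\vt\in[0,\vt_*]}|\tfrac1n\sum_j W_jf'(\vt W_j)-g'(\vt)|$ and continuity of $(g')^{-1}$.)
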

\begin{proof}
Since trivially
\begin{eqnarray}
	&&|\Psi_n''(\vt_n^{\WW}(a))-\Psi''(\vt(a))| \nonumber \\
	&&\leq |\Psi_n''(\vt_n^{\WW}(a))-\Psi''(\vt_n^{\WW}(a))| + |\Psi''(\vt_n^{\WW}(a))-\Psi''(\vt(a))|,
\end{eqnarray}
Lemma \thv(plop.5) follows if both
\begin{equation}\label{term1}
	\P\left(\forall a \in  J: \lim_{n\to\infty}|\Psi_n''(\vt_n^{\WW}(a))-\Psi''(\vt_n^{\WW}(a))|=0\right)=1
\end{equation}
and
\begin{equation}\label{term2}
	\P\left(\forall a \in J : \lim_{n\to\infty}|\Psi''(\vt_n^{\WW}(a))-\Psi''(\vt(a))|=0\right)=1.
\end{equation}
Now, $\Psi''(\vt)$ is a  continuous function of $\vt$ and uniformly continuous on the compact interval $[0,\vt_*]$. This implies  that 
\begin{equation}
	\forall \e>0 \, \exists \d=\d(\e) : \forall \vt,\vt':|\vt-\vt'|<\d \quad |\Psi''(\vt)-\Psi''(\vt')|<\e.
\end{equation}
From  the uniform almost sure convergence of $\vt_n^{\WW}(a)$ to $\vt(a)$,  
it follows that
\begin{equation}
	\forall \d>0 \, \exists N=N(\o,\d) : | \vt_n^{\WW}(a)-\vt(a)|<\d, 
\end{equation}
which in turn  implies that 
\begin{equation}
	\forall n\geq N: |\Psi''(\vt_n^{\WW}(a))-\Psi''(\vt(a))|<\e.
\end{equation}
Therefore, Equation \eqref{term2} holds. The proof of \eqref{term1} is very similar 
to that of Lemma \thv(plop.6). The difference is that we cannot use monotonicity to 
obtain a majorant for $W_1^2f''(\vt W_1)$, but instead use 
 Condition \eqref{majorant} of Theorem \ref{LargeDev}. Again, as in \eqv(plop.3), 
\begin{eqnarray}\nonumber
	&&\P\left(\forall a \in J: \lim_{n\to\infty}|\Psi_n''(\vt_n^{\WW}(a))-\Psi''(\vt_n^{\WW}(a))|=0\right)\\
	&&\geq \P\left(\sup_{\vt\in[0,\vt_*]} \lim_{n\to\infty}|\Psi_n''(\vt)-\Psi''(\vt)|=0\right).
\end{eqnarray}
Moreover, the following facts are true: 
\begin{enumerate}
\item By Condition \eqref{majorant} of Theorem \ref{LargeDev}, $W_1^2f''(\vt W_1)\leq F(W_1)$ and $\E[F(W_1)]<\infty$. \label{int2}
\item $W_1^2(\o)f''(\vt W_1(\o))$ is a continuous function of $\vt$ $\forall \o\in \O$.  \label{continuity2}
\item \label{plop.7} From \eqref{int2} and \eqref{continuity2} it follows by dominated convergence that 
for all $\vt\in [0,\vt_*]$ that 
\begin{equation}
	\lim_{\d\downarrow 0} \E\left[\sup_{\bar{\vt}\in B_{\d}(\vt)}W_1^2f''(\bar{\vt} W_1)- \inf_{\bar{\vt}\in B_{\d}(\vt)}W_1^2f''(\bar{\vt} W_1)\right] = 0.
\end{equation}
\end{enumerate}
The proof of Lemma \thv(plop.5) proceeds from here exactly as the proof of Lemma
\thv(plop.6), just replacing $f'$ by $f''$ and $W_1$ by $W_1^2$.
\end{proof}

 Condition (\ref{2}) of Theorem \ref{abwthm} now follows immediately.

 Next we  show that  Condition (\ref{3}) is satisfied. 
We want to show that $\forall 0<\d_1<\d_2<\infty $
\begin{equation}\label{asconvcf}
	\P\left( \forall a\in J:
	 \lim_{n\to \infty}\sqrt n \sup_{\d_1\leq|t|\leq\d_2\vt_n^{\WW}(a)}\Bigg|\frac{\Phi_n^{\WW}(\vt_n^{\WW}(a)+it)}{\Phi_n^{\WW}(\vt_n^{\WW}(a))}\Bigg|=0\right)=1.
\end{equation}
As above we bound the probability in \eqv(asconvcf) from below by
\begin{equation}\label{asconvcf2}
\P\left(\lim_{n\to \infty}\sqrt n \sup_{\vt\in[0,\vt_*]}\sup_{\d_1\leq|t|\leq\d_2\vt}\left|\frac{\Phi_n^{\WW}(\vt+it)}{\Phi_n^{\WW}(\vt)}\right|=0\right).
\end{equation}
Therefore, \eqv(asconvcf) follows from the first  Borel-Cantelli lemma if,
 for each $\d>0$,
\begin{equation}\label{sum}
	\sum_{n=1}^{\infty} \P\left(\sqrt n  \sup_{\vt\in[0,\vt_*]}\sup_{\d_1\leq|t|\leq\d_2\vt}\left|\frac{\Phi_n^{\WW}(\vt+it)}{\Phi_n^{\WW}(\vt)}\right| >\d\right)<\infty .
\end{equation}
Note that
\begin{equation}\label{product}
	\left|\frac{\Phi_n^{\WW}(\vt+it)}{\Phi_n^{\WW}(\vt)}\right|=\prod_{j=1}^n\left|\frac  {M(W_j(\vt+it))}{M(W_j\vt)}\right|
\end{equation}
is a product of functions with absolute value less or equal to $1$. Each factor is the characteristic function of a tilted $Z_j$. According to a result of Feller \cite{Feller2} there are 3 classes of characteristic functions.
\begin{lemma}[Lemma 4 in  Chapter XV in \cite{Feller2}] Let $\phi$ be the characteristic  function of a probability distribution function $F$. 
Then one of the following must hold:
\begin{enumerate}
\item \label{plop.10}$|\phi(\z)|<1$ for all $\z\neq 0$.
\item \label{plop.11} $|\phi(\l)|=1$ and $|\phi(\z)|<1$ for $0<\z<\l$. In this case $\phi$ has period $\l$ and there exists a real number $b$ such that $F(x+b)$ is arithmetic with span $h=\nicefrac{2\pi}{\l}$.
\item \label{plop.12} $|\phi(\z)|=1$ for all $\z$. In this case $\phi(\z)=\rm{e}^{ib\z}$ and $F$ is concentrated at the point $b$.
\end{enumerate}
\end{lemma}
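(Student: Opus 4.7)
The plan is to analyze the set $S \equiv \{\z \in \R : |\f(\z)| = 1\}$ and show it is a closed additive subgroup of $\R$. Since the closed subgroups of $\R$ are exhausted by $\{0\}$, $\l\Z$ for some $\l > 0$, and $\R$ itself, this trichotomy maps directly onto the three cases of the lemma.

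The key rigidity input would be the following: if $\z_0 \neq 0$ and $|\f(\z_0)| = 1$, write $\f(\z_0) = e^{i\alpha}$; then
\begin{equation}
1 = \left|\int e^{i\z_0 x}\,dF(x)\right| \leq \int 1\,dF(x) = 1,
\end{equation}
so equality holds in the triangle inequality for complex-valued integrals. This forces $e^{i\z_0 x} = e^{i\alpha}$ for $F$-almost every $x$, i.e.\ $F$ is supported on the arithmetic progression $A_{\z_0} \equiv \{(\alpha + 2\pi k)/\z_0 : k \in \Z\}$.

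From this one derives the group properties of $S$. Symmetry follows from $\f(-\z) = \overline{\f(\z)}$, closedness from continuity of $\f$, and additivity from the following: if $\z_1, \z_2 \in S$ have phases $\alpha_1, \alpha_2$, then $F$ is supported on $A_{\z_1} \cap A_{\z_2}$, so $(\z_1+\z_2)X \equiv \alpha_1 + \alpha_2 \pmod{2\pi}$ almost surely, giving $|\f(\z_1 + \z_2)| = 1$. The standard classification of closed subgroups of $\R$ then yields the trichotomy.

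The rest is bookkeeping. In case 2, with $\l$ the minimal positive element of $S$ and $\alpha$ the corresponding phase, the support of $F$ lies in $A_\l$; choosing $b$ so that $b\l \equiv \alpha \pmod{2\pi}$ places $F(\cdot + b)$ on the lattice $(2\pi/\l)\Z$, and a direct computation gives $\f(\z + \l) = e^{i\alpha}\f(\z)$, so $|\f|$ is $\l$-periodic. In case 3, $S = \R$ forces the support to lie in arbitrarily fine arithmetic progressions, which only a Dirac mass $\delta_b$ can achieve, and then $\f(\z) = e^{ib\z}$ is immediate. The main subtle point is the rigidity statement above, which is nothing more than the equality case of the triangle inequality for integrals; once that is in hand, everything else is a routine combination of continuity, elementary algebra in the arithmetic lattices $A_{\z}$, and the structure theorem for closed subgroups of the real line.
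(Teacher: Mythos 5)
This lemma is quoted verbatim from Feller \cite{Feller2} and the paper supplies no proof of its own, so there is nothing internal to compare against; I can only assess your argument on its own terms, and it is correct. Reducing the trichotomy to the classification of closed subgroups of $\R$ is a clean way to organize Feller's argument: the rigidity step (the equality case of the triangle inequality forcing $e^{i\zeta_0 x}$ to be $F$-a.s.\ constant, hence $F$ supported on an arithmetic progression) is precisely the engine of Feller's proof, and packaging the set $S=\{\zeta : |\phi(\zeta)|=1\}$ as a closed subgroup of $\R$ is a mild structural reformulation that makes the three cases $\{0\}$, $\lambda\Z$, $\R$ fall out automatically. Two small points worth flagging. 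First, in case~2 your computation yields $\phi(\zeta+\lambda)=e^{i\alpha}\phi(\zeta)$, which shows that $|\phi|$ is $\lambda$-periodic (equivalently, the characteristic function of the recentered distribution $F(\cdot+b)$ is $\lambda$-periodic); the phrase ``$\phi$ has period $\lambda$'' in the quoted statement is slightly loose, and your formulation ``$|\phi|$ is $\lambda$-periodic'' is the accurate one. Second, in the additivity step you implicitly use that $A_{\zeta_1}\cap A_{\zeta_2}$ is nonempty; this is automatic since a probability measure is supported there, but it deserves a sentence. With those cosmetic remarks, the proof is complete and essentially the textbook argument.
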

Case \eqv(plop.12) is excluded by assumption. Under Condition \eqref{Nolattice} of Theorem 
\ref{LargeDev} we are in Case \eqv(plop.10). In this case 
 it is rather easy to verify Equation \eqref{asconvcf}. Namely, observe that there exists
 $0<\rho<1$, such that for all $\vt\in [0,\vt_*]$, 
 for all $\d_1\leq t\leq \d_2\vt_*$, whenever $K^{-1}\leq |W_j|\leq K$, 
 for some $0<K<\infty$,
 \be\Eq(plop.13)
 \left|\frac  {M(W_j(\vt+it))}{M(W_j\vt)}\right|<1-\rho.
 \ee
 This implies that, for $\vt$ as specified, 
  \be\Eq(plop.13.1)
 \left|\frac  {M(W_j(\vt+it))}{M(W_j\vt)}\right |\leq \left(1-\rho\right)^{\1_{ \{\frac{1}{K}\leq|W_j|\leq K \}}}.
 \ee

Therefore,
\begin{eqnarray}\nonumber
&& \P\left(\sqrt n  \sup_{\vt\in[0,\vt_*]}\sup_{\d_1\leq|t|\leq\d_2\vt}\prod_{j=1}^n\left|\frac  {M(W_j(\vt+it))}{M(W_j\vt)}\right|>\d\right) \\
&&\leq \P\left(\sqrt n(1-\r)^{\sum_{j=1}^n \mathds{1}_{ \{\frac{1}{K}\leq|W_j|\leq K \}}}>\d\right), \label{uni10}
\end{eqnarray}
where $K$ is chosen such that $\P\left(\frac{1}{K}\leq|W_j|\leq K \right)>0$. 
With $c_n\equiv\frac{\log \d-\frac 12\log n}{\log(1-\r)}$, the probability in the second line of  \eqref{uni10} is equal to 
\begin{eqnarray}\nonumber
 &&\P\left(\sum_{j=1}^n \mathds{1}_{\{\frac{1}{K}\leq|W_j|\leq K\}} < c_n\right)\\
 \nonumber
&&\leq \sum_{k=1}^{\left\lceil c_n\right\rceil} \binom{n}{k}\P\left(\frac{1}{K}\leq|W_j|\leq K \right)^k\left[1-\P\left(\frac{1}{K}\leq|W_j|\leq K \right)\right]^{n-k}\\
&&\leq \left\lceil c_n\right\rceil\binom {n}{\left\lceil c_n\right\rceil}\left[1-\P\left(\frac{1}{K}\leq|W_j|\leq K \right)\right]^{n-\lfloor c_n\rfloor}.
\end{eqnarray}
 Since $\binom {n}{\left\lceil c_n\right\rceil}\sim n^{C\ln n}$ for a constant $C$,  this is summable in $n$ and \eqref{sum} holds.

Case \eqv(plop.11) of  lattice-valued random variables $Z_j$, which corresponds to Condition \eqref{Nolattice2} of Theorem \ref{LargeDev}, is more subtle.  Each 
of the factors in the product in \eqv(product) is a periodic function, which is equal to $1$ if and only if $W_j t \in \{k\l, k\in \Z\}$, where $\l$ is the period of this function.
This implies that each factor is smaller than $1$ if \mbox{$W_j\notin \{\nicefrac{k\l}{t},
 k\in \Z\}$}. The points of this set do not depend on $\vt$ and have the 
smallest distance to each other if $t$ is maximal, i.e. $t=\d_2\vt_*$. 
Each factor is \emph{strictly} smaller than $1$ if $tW_j$ does not lie in a finite interval around one of these points. 
We choose these intervals as follows. 
Let  $\wt{\d}=\frac{1}{8\d_2\vt_*}$ and define the intervals
\begin{equation}
	I(k,t,\wt{\d})\equiv \left[\frac{k\l}{t}-\wt{\d},\frac{k\l}{t}+\wt{\d}\right].
\end{equation} 
This intervals are disjoint and consecutive intervals are separated by a distance at least 
 $6\wt{\d}$ from  each other.  Then, for all  $\vt\in[0,\vt_*]$ there exists $0<\rho(\vt)<1$ ,  independent of 
 $t$, such that 
 \begin{equation}\label{uni4}
	\left|\frac  {M(W_j(\vt+it))}{M(W_j\vt)}\right|\leq \left(1-\r(\vt)\right)^{\mathds{1}_{\{|W_j|\notin \cup_{k\in \Z} I(k,t,\wt{\d})\}}}.
\end{equation}
Furthermore, $\left|\frac{M(\th+it)}{M(\th)}\right|$ is continuous in $\th$, and thus its supremum over compact intervals is attained. Thus,  for any $C>0$ there  exists
  $\bar{\r}=\bar{\r}(C)>0$ such that, for all $\vt\in [0,\vt_*]$,
\begin{equation}\label{rho}
	\left|\frac  {M(W_j(\vt+it))}{M(W_j\vt)}\right|\leq \left(1-\bar{\r}\right)^{\mathds{1}_{\{W_j\in[-C,C] \backslash \cup_{k\in \Z} I(k,t,\wt{\d})\}}}.
\end{equation}
We choose $C$  such that  the interval $[c,d]$ from Hypothesis  \eqv(Nolattice) is 
contained in $ [-C,C]$.
Then we get with Equation \eqref{uni4} and \eqv(rho) that 
\begin{eqnarray}\label{inf}\nonumber
	&& \P\Bigg(\sqrt n \sup_{\vt\in [0,\vt_*]}\sup_{\d_1\leq|t|\leq\d_2\vt }\prod_{j=1}^n
	\Bigg|\frac{M(W_j(\vt+it))}{M(W_j\vt)}\Bigg|>\d\Bigg) \\\nonumber 
	&&\leq \P\Bigg(\sqrt n \sup_{\vt\in [0,\vt_*]}\sup_{\d_1\leq|t|\leq\d_2\vt_* }\prod_{j=1}^n \left(1-\bar{\r}\right)^{\mathds{1}_{\{W_j\in[-C,C] \backslash \cup_{k\in \Z} I(k,t,\wt{\d})\}}}>\d\Bigg) \nonumber \\
	&& = \P\left(\sqrt n \left(1-\bar{\r}\right)^{\inf_{\d_1\leq|t|\leq\d_2\vt_*}\sum_{j=1}^n\mathds{1}_{\{W_j\in[-C,C] \backslash \cup_{k\in \Z} I(k,t,\wt{\d})\}}}>\d\right). 
\end{eqnarray}
With $c_n\equiv \tfrac{\log \d-\frac 12\log n}{\log(1-\bar{\r})}$ Equation  \eqref{inf} can be rewritten as
\begin{eqnarray}\label{uni6}
	&& \P\Big(\inf_{\d_1\leq|t|\leq\d_2\vt_*}\sum_{j=1}^n\mathds{1}_{\{W_j\in[-C,C] \backslash \cup_{k\in \Z} I(k,t,\wt{\d})\}}<c_n\Big)  \nonumber \\
	&&\leq \P\Big(\inf_{\d_1\leq|t|\leq\d_2\vt_*}\sum_{j=1}^n\mathds{1}_{\{W_j\in ([-C,C] \cap [c,d] )\backslash \cup_{k\in \Z} I(k,t,\wt{\d})\}}<c_n\Big).
\end{eqnarray}
\eqref{uni6} is summable over  $n$ since the number of $W_j$ contained in the ``good'' sets is of order $n$, i.e. $\# \{j:W_j\in [c,d]\backslash \cup_{k\in \Z} I(k,t,\wt{\d})\}=\OO(n)$.
Define 
\be\Eq(plop.20)
K(t)=\#\{k: I(k,t,\wt{\d})\cap [c,d]\neq \emptyset\},
\ee
and let $k_1,\dots k_{K(t)}$ enumerate the intervals contained in $[c,d]$. 
Let $m_1(t),\dots, m_{K(t)}(t)$ be chosen such that $W_{m_i(t)}\in I(k_i,t,\wt{\d})$. Note 
that $m_i(t)$ are random. The probability in the last line of   \eqref{uni6} is bounded from 
above by
\begin{equation}\label{translate}
  \P\left(\inf_{\d_1\leq|t|\leq\d_2\vt_*} \sum_{j=1}^n\mathds{1}_{\{W_j\in [c,d],|W_j-
  W_{m_1(t)}|>2\wt{\d}, \dots, |W_j-W_{m_{K(t)}(t)}|>2\wt{\d}\}}\leq c_n\right).
\end{equation}
Since there are only finitely many intervals of length $2\wt{\d}$ with distance $6\wt{\d}$
 to each other in $[c,d]$, there exists $K<\infty$ such that $\sup_{t\in [\d_1,\d_2\vt_*]}
 K(t)<K$.
Thus, the probability in  \eqref{translate} is not larger than
\begin{eqnarray}
 && \P\Bigg( \exists_{m_1,\dots, m_{K}\in\{1,\dots, n\}} : \nonumber
 \sum_{j=1}^n\mathds{1}_{\left\{W_j\in [c,d],|W_j-W_{m_1}|>2\wt{\d}, \dots, |W_j-W_{m_{K}}|>2\wt{\d}\right\}}\leq c_n\Bigg) \nonumber \\
&&\leq \sum_{m_1,\dots, m_K=1}^n\P\Bigg( \sum_{j=1}^n\mathds{1}_{\left\{W_j\in [c,d],|W_j-W_{m_1}|>2\wt{\d}, \dots, |W_j-W_{m_{K}}|>2\wt{\d}\right\}}\leq c_n\Bigg) \nonumber \\
&&\leq n^K \P\Bigg( \sum_{j=1}^n\mathds{1}_{\left\{W_j\in [c,d],|W_j-W_{m_1}|>2\wt{\d}, \dots, |W_j-W_{m_{K}}|>2\wt{\d}\right\}}\leq c_n\Bigg).\label{boundsum2}
\end{eqnarray}
The indicator function vanishes whenever $j=m_i$ with $i\in \{1,\dots, K\}$. Thus, \begin{eqnarray}\label{uni7}
&&\P\Bigg( \sum_{j=1}^n\mathds{1}_{\left\{W_j\in [c,d],|W_j-W_{m_1}|>2\wt{\d}, \dots, |W_j-W_{m_{K}}|>2\wt{\d}\right\}}\leq c_n\Bigg)  \nonumber\\
&&= \P\Bigg( \sum_{j \not\in \{ m_1, \dots, m_K\}}^n\mathds{1}_{\left\{W_j\in [c,d],|W_j-W_{m_1}|>2\wt{\d}, \dots, |W_j-W_{m_{K}}|>2\wt{\d}\right\}}\leq c_n\Bigg)\nonumber  \\
&&=\P\Bigg( \sum_{j=K}^{n}\mathds{1}_{\left\{W_j\in [c,d],|W_j-W_{1}|>2\wt{\d}, \dots, |W_j-W_{K}|>2\wt{\d}\right\}}\leq c_n\Bigg)\label{boundsum}
\end{eqnarray}
due to the i.i.d. assumption.
\eqref{uni7} is equal to 
\begin{equation}\label{uni8}
\sum_{l=0}^{\left\lceil c_n\right\rceil} \binom{n-K}{l}\P(A)^l\left(1-\P(A)\right)^{n-K-l}\leq \left\lceil c_n\right\rceil \binom{n-K}{\left\lceil c_n\right\rceil} (1-\P(A))^{n-K-\left\lceil c_n\right\rceil}. 
\end{equation}
Here $A$ is the event
\begin{equation}
 A=\left\{W\in [c,d],|W-W_{1}|>2\wt{\d}, \dots, |W-W_{K}|>2\wt{\d}\right\},
 \end{equation}
 where $W$ is an independent copy of $W_1$. 
We show that  $\P(A)$ is strictly positive. 
\begin{eqnarray}
\P(A)
&=& \int_{[c,d]} \P\left(|W-W_{1}|>2\wt{\d}, \dots, |W-W_{K}|>2\wt{\d}\, \Big|\, W\right)dP_{W}\\\nonumber
&\geq& \int_{[c,d]} \P\left(W_i \in [W-2\wt{\d},W+2\wt{\d}]^c\cap [c,d] , \forall  i \in \{1,\dots, K\}\right)dP_{W},
\end{eqnarray}
where $P_{W}$ denotes the distribution of $W$.
Since the random variables  $W_1,\dots, W_K, W$ are independent of each other,
 this is equal to
\be  \int_{[c,d]}\P\left(W_1 \in [W-2\wt{\d},W+2\wt{\d}]^c\cap [c,d]|W\right)^KdP_{W},
\ee
and due to the lower bound on the density of  $P_W$ postulated in Hypothesis
\eqv(Nolattice), this in turn is bounded from below by
\be
(p(d-c-4\wt{\d}))^K\int_{[c,d]} dP_{W}
\geq (d-c)p^{K+1}(d-c-4\wt{\d})^K \equiv \wt{p} \in (0,1].\label{uni9}
\ee
Combining Equations \eqref{uni8} and \eqref{uni9} we obtain
\be
	\eqref{boundsum2} \leq n^K\left\lceil c_n\right\rceil\binom{n}{\left\lceil c_n
	\right\rceil}\wt{p}^{n-K-\left\lceil c_n\right\rceil}
\ee
which is summable over $n$, as desired. Thus all hypotheses of Theorem 
\thv(abwthm) are 
satisfied with probability one, uniformly in $a\in J$, and so 
the conclusion of  Theorem \thv(LargeDev) follows. 
\end{proof}


\section{Proof of Theorem \ref{FCLT}}\label{proofs2}

In order to prove Theorem \ref{FCLT} we need  the joint weak 
convergence of the process $X_n$, defined in \eqv(plop.30) and its derivatives, 
as stated in Lemma \thv(JWC).
 Define on the closure, $\bar J$ of the interval $J$ (recall the definition of $J$ in 
 Theorem \thv(LargeDev)), the processes $(\wh X^n_a)_{a\in \bar J}$, $n\in\N$, via
\begin{equation}\label{process}
	\wh X_a^n\equiv (X_n(\vt(a)),X_n'(\vt(a)),X_n''(\vt(a))).
\end{equation}
\begin{lemma} \label{JWC}
The family of processes
$(\wh X_a^n)_{a\in \bar J}$  defined on 
$\left(C(\bar J,\R^3), \BB( C(\bar J,\R^3)\right)$, converges weakly, as $n\to\infty$, to a 
process $(\wh X_a)_{a\in \bar J}$ on the same space,
  if there exists $c>0$, such that, for all $a\in \bar J$,  $g''(\vt(a))>c$,  and  if Assumption \ref{Cov} is satisfied.
\end{lemma}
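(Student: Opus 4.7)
The plan is the standard two-step approach to weak convergence in $C(\bar J,\R^3)$: establish convergence of finite-dimensional distributions (FDDs) and then prove tightness via a Kolmogorov--Chentsov moment bound.

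For the FDDs, fix $m\in\N$ and $a_1,\ldots,a_m\in\bar J$. Observe that $(\wh X_{a_1}^n,\ldots,\wh X_{a_m}^n)$ equals $n^{-1/2}\sum_{j=1}^n Y_j$, where the i.i.d. centered $\R^{3m}$-valued vectors $Y_j$ have coordinates
\be
f(W_j\vt(a_k))-g(\vt(a_k)),\; W_jf'(W_j\vt(a_k))-g'(\vt(a_k)),\; W_j^2 f''(W_j\vt(a_k))-g''(\vt(a_k)),
\ee
for $k=1,\ldots,m$. By Assumption \ref{Cov} every entry of the covariance matrix $\Sigma$ of $Y_1$ is finite. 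The multivariate Lindeberg--L\'evy CLT (equivalently, Cram\'er--Wold) then yields convergence to a centered Gaussian vector with covariance $\Sigma$, which is precisely the restriction of the claimed process $(\wh X_a)_{a\in\bar J}$ to $\{a_1,\ldots,a_m\}$.

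For tightness, note that the hypothesis $g''(\vt(a))>c$ on $\bar J$ makes $a\mapsto\vt(a)$ a $C^1$ map with derivative $1/g''(\vt(a))\le 1/c$, so its image $\vt(\bar J)\subset[0,\vt_*]$ is a compact interval. Term-by-term differentiation produces, for $\ell\in\{0,1,2,3\}$,
\be
X_n^{(\ell)}(\vt)=\frac 1{\sqrt n}\sum_{j=1}^n\bigl(W_j^\ell f^{(\ell)}(\vt W_j)-\E[W_j^\ell f^{(\ell)}(\vt W_j)]\bigr),
\ee
the exchange of $d/d\vt$ and $\E$ being justified by Conditions (\ref{expectations}) and (\ref{majorant}) of Theorem \ref{LargeDev} up to order two, and by the $L^2$ bound of Assumption \ref{Cov} at order three. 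Since for each $\vt\in\vt(\bar J)$ one has $\vt=\vt(\tilde a)$ for some $\tilde a\in\bar J$, Assumption \ref{Cov} yields $\sup_{\vt\in\vt(\bar J)}\E[(X_n^{(\ell+1)}(\vt))^2]\le C$ uniformly in $n$, for $\ell=0,1,2$. Writing
\be
X_n^{(\ell)}(\vt(a))-X_n^{(\ell)}(\vt(a'))=\int_{\vt(a')}^{\vt(a)}X_n^{(\ell+1)}(\vt)\,d\vt
\ee
and applying Cauchy--Schwarz then gives
\be
\E\bigl[(X_n^{(\ell)}(\vt(a))-X_n^{(\ell)}(\vt(a')))^2\bigr]\le C|\vt(a)-\vt(a')|^2\le Cc^{-2}|a-a'|^2.
\ee
This is the Kolmogorov--Chentsov criterion with $\alpha=2$, $1+\beta=2$, and combined with tightness of the one-point marginal $\wh X^n_{a_0}$ (already guaranteed by Step 1 at a fixed $a_0\in\bar J$), it produces tightness of each of the three coordinates in $C(\bar J,\R)$; componentwise tightness implies joint tightness in $C(\bar J,\R^3)$.

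The main technical obstacle is promoting the pointwise bounds from Assumption \ref{Cov} to bounds that are uniform in $\vt$ over the full interval of integration appearing in the tightness step. This is handled by the observation that $\vt(\cdot)$ is strictly monotone (because $g'$ is strictly increasing) and continuous, so the range $\vt(\bar J)$ coincides with the set of admissible parameters in Assumption \ref{Cov}; combined with the majorant from Condition (\ref{majorant}) of Theorem \ref{LargeDev}, which legitimises differentiation under the expectation, this closes the argument. Once the uniform variance estimates are secured, the FDD and tightness parts assemble to give weak convergence on $C(\bar J,\R^3)$, proving the lemma.
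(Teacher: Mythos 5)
Your proposal is correct and follows essentially the same two-step strategy as the paper: a multivariate CLT for the finite-dimensional distributions, and a second-moment Kolmogorov--Chentsov bound for tightness, with the key inputs being $\frac{d}{da}\vt(a)=1/g''(\vt(a))\le 1/c$ and the uniform variance bounds from Assumption~\ref{Cov}. The only cosmetic differences are that you phrase the increment bound via an integral in $\vt$ (with the Lipschitz constant of $a\mapsto\vt(a)$) rather than differentiating directly in $a$ via the implicit function theorem, and that you obtain tightness of the one-point marginal from the FDD convergence instead of the paper's explicit Chebyshev estimate; both are immediate equivalents of what the paper does.
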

\begin{proof}  As usual, we prove convergence of  
 the finite dimensional distributions and tightness.
 
 More precisely,  we have to check that:
\begin{enumerate}
\item \label{convfdd} $(\wh X_a^n)_{a\in \bar J}$ converges in finite dimensional distribution. 
\item \label{starttight}The family of initial distributions, i.e. the distributions of 
$\wh X_b^n$, where  $b\equiv \E[Z_1W_1]$, 
is tight. 
\item \label{tightness} There exists $C>0$ independent of $a$ and $n$ such that 
		\begin{equation}
		\E\left[\|\wh X_{a+h}^n-\wh X_a^n\|^2\right]\leq C|h|^2,
		\end{equation} 
		which is a Kolmogorov-Chentsov criterion for tightness, see 
		 \cite[Corollary 14.9]{KAL}.
\end{enumerate}

First, we consider the finite dimensional distributions. Let 
\begin{eqnarray}
Y_{a,j} &\equiv& f(\vt(a)W_j)- \E\left[f(\vt(a)W_j)\right] \nonumber \\
 Y_{a,j}' &\equiv &W_jf'(\vt(a)W_j)-\E[W_jf'(\vt(a)W_j)] \quad \text{ and } \nonumber \\
 Y_{a,j}'' &\equiv & W_j^2f''(\vt(a)W_j)- \E[W_j^2f''(\vt(a)W_j)]. \label{bla.1}
 \end{eqnarray}
Moreover, let $\ell\in \N$, 
$a_1<a_2<\dots<a_\ell \in \bar J$ and
\begin{equation}\label{bla.2}
 \chi_j\equiv \Big(Y_{a_1,j},Y_{a_1,j}',Y_{a_1,j}'',\dots,Y_{a_\ell,j},Y_{a_\ell,j}',Y_{a_\ell,j}''\Big)\in \R^{3\ell}.
 \end{equation}
 These vectors are independent for different $j$ and  its components 
 $(\chi_j)_k, 1\leq k\leq 3\ell$, have covariances  \mbox{$\Cov((\chi_j)_k,
 (\chi_j)_m)=\CC_{km}<C$} for all $k,m \in \{1,\dots, 3\ell\}$,  according to Assumption 
 \ref{Cov}. Therefore, $\sfrac 1{\sqrt n}\sum_{j=1}^n \chi_j$ converges, as $n\to \infty$, 
 to  the $3\ell$-dimensional Gaussian vector with mean zero and covariance matrix
 $\CC$ by the  central limit theorem. This proves   convergence 
 of the  finite dimensional distributions of $(\wh X^n_a)_{a\in \bar J}$. 
 
The family of initial distributions is given by the random variables evaluated in $\vt(b)$. This family is seen to be  tight using Chebychev's inequality
\begin{eqnarray} \label{start}
\P\left(\left\| X_{b}^n\right\|_2>C\right) &\leq &\frac{\mathbb V\left[\sqrt{X_n(\vt(b))^2+(X_n'(\vt(b)))^2+(X_n''(\vt(b)))^2}\right]}{C^2} \nonumber \\
& \leq &\frac{\E\left[X_n(\vt(b))^2+(X_n'(\vt(b)))^2+(X_n''(\vt(b)))^2\right]}{C^2}.
\end{eqnarray}
which is finite by Assumption \thv(Cov).
For each $\e$ we can choose $C$ large enough such that \eqref{start} $<\e$. 
It remains to check Condition \eqref{tightness}. Since
\begin{eqnarray}\label{tightX} \nonumber 
\E\left[\|\wh X_{a+h}^n-\wh X_a^n\|^2\right]&=&\E\left[(X_n(\vt(a+h))-X_n(\vt(a)))^2\right]\\\nonumber
&&+\E\left[[X_n'(\vt(a+h))-X_n'(\vt(a)))^2\right] \nonumber \\
	&&
	 +\E\left[(X_n''(\vt(a+h))-X_n''(\vt(a)))^2\right],
\end{eqnarray}
we need to show that each of the three terms on the right-hand side is of order $h^2$.
Note that \mbox{$\E\left[[X_n(\vt(a+h))-X_n(\vt(a))]^2\right]\leq C|h|^2$} if 
\begin{equation}\label{versiontightness}
	\E\left[\left( \tfrac {d}{da}X_n(\vt(a))\right)^2\right]\leq C.
\end{equation}
Since $X_n(\vt(a))=\frac{1}{\sqrt n}\sum_{j=1}^{n}Y_{a,j}$, 
\begin{equation}\label{bla.3}
	\E\left[\left( \tfrac {d}{da}X_n(\vt(a))\right)^2\right]=\frac{1}{n}\sum_{j=1}^n\E\left[\left(\tfrac{d}{da}Y_{a,j}\right)^2\right].
\end{equation}
Each summand can be controlled by
\begin{eqnarray}
&&\E\left[\left(\tfrac{d}{da}Y_{a,j}\right)^2\right] = \E\left[\left(\tfrac{d}{da} f(\vt(a)W_j)\right)^2\right]-\left(\E\left[\tfrac{d}{da}f(\vt(a)W_j)\right]\right)^2\nonumber  \\
&&= \left(\tfrac{d}{da}\vt(a)\right)^2\left(\E\left[W_j^2f'(\vt(a)W_j)^2\right]-\left(\E\left[W_jf'(\vt(a)W_j)\right]\right)^2\right) \nonumber \\
&&=  \left(\tfrac{d}{da}\vt(a)\right)^2\Va\left[W_jf'(\vt(a)W_j)\right].\label{bla.4}
\end{eqnarray}
By the implicit function theorem, 
\begin{equation}\label{bla.5}
\frac {d}{da}\vt(a)=\left(g''(\vt(a))\right)^{-1}.
\end{equation}
Thus,  Equation \eqref{versiontightness} holds since $g''(\vt(a))>c$ by 
assumption  and $\Va\left[W_jf'(\vt(a)W_j)\right]$ is bounded by Assumption \ref{Cov}. The bounds for the remaining terms follow in the same way by controlling   the derivatives of  $X_n'(\vt(a))$ and $X_n''(\vt(a))$. We obtain
\begin{eqnarray}\label{bla.6}
\E\left[\left(\tfrac{d}{da}Y_{a,j}'\right)^2\right] &=& \E\left[\left(\tfrac{d}{da} W_jf'(\vt(a)W_j)\right)\right]-\left(\E\left[\tfrac{d}{da}W_jf'(\vt(a)W_j)\right]\right)^2 \nonumber \\
&=&\left(\tfrac{d}{da}\vt(a)\right)^2\Va\left[W_j^2f''(\vt(a)W_j)\right]
\end{eqnarray}
and
\begin{eqnarray}\label{bla.7}
\E\left[\left(\tfrac{d}{da}Y_{a,j}''\right)^2\right] &=& \E\left[\left(\tfrac{d}{da} W_j^2f''(\vt(a)W_j)\right)\right]-\left(\E\left[\tfrac{d}{da}W_j^2f''(\vt(a)W_j)\right]\right)^2 \nonumber\\
&=&\left(\tfrac{d}{da}\vt(a)\right)^2\Va\left[W_j^3f'''(\vt(a)W_j)\right].
\end{eqnarray}
In both formulae the right hand sides are bounded due to Assumption \thv(Cov). 
This proves the lemma.
\end{proof}
\begin{proof}[Proof of Theorem \ref{FCLT}]
Recall that   $\vt_n^{\WW}(a)$  is determined as the solution of the  equation 
\begin{equation}\label{bla.8}
a =g'(\vt) + \frac{1}{\sqrt n} X_n'(\vt).
\end{equation}
Write $\vt_n^{\WW}(a)\equiv \vt(a)+\d^n(a)$, where $\vt(a)$  is defined as the solution of 
\begin{equation}\label{bla.9}
a = g'(\vt).
\end{equation}
Note that $\vt(a)$ is deterministic while $\d^n(a)$ is random and $\WW$-measurable . 
The rate function can be rewritten as 
\begin{equation}\label{bla.10}
I_n^{\WW}(a) = a(\vt(a)+\d^n(a))-g( \vt(a)+\d^n(a)) -\frac{1}{\sqrt n}X_n (\vt(a)+\d^n(a)).
\end{equation}
A second order Taylor expansion and reordering of the terms yields
\begin{eqnarray}\label{I1}
I_n^{\WW}(a) = &&\underbrace{a\vt(a)-g( \vt(a))}_{{\equiv}I(a)}-\frac{1}{\sqrt n}X_n( \vt(a))\nonumber \\
&& +\underbrace{\left(a- g'( \vt(a))\right)}_{=0}\d^n(a) -\frac{1}{\sqrt n} \d^n(a) X_n'( \vt(a))
\nonumber \\
&&\quad  -\frac 12 ( \d^n(a))^2\left(g''( \vt(a))+\frac{1}{\sqrt n}X_n''( \vt(a))\right) + \po(( \d^n(a))^2).
\end{eqnarray}
Note that the leading terms on the right-hand side involve the three components of 
the processes $\wh X^n$ whose convergence we have just proven.
We obtain the following equation  for $\d^n(a)$ using a first order Taylor expansion.
\begin{eqnarray}\label{bla.11}
a&=&  g'( \vt(a)+\d^n(a)) + \frac{1}{\sqrt n }X_n'(\vt(a)+\d^n(a)) \\ \nonumber 
&= &g'( \vt(a))+\frac{1}{\sqrt n} X_n'( \vt(a))  + \d^n(a) \left(g''(\vt(a)) + \frac{1}{\sqrt n}X_n''( \vt(a))\right) + \po(\d^n(a)),
\end{eqnarray}
which implies 
\begin{equation}\label{bla.12}
 \d^n(a) = \frac{-\frac{1}{\sqrt n}X_n'( \vt(a))}{ g''( \vt(a))+\frac{1}{\sqrt n}X_n''( \vt(a))}+\po(\d^n(a)).
\end{equation}
Lemma \ref{JWC} combined with $g''(\vt(a)) = \OO(1)$ yields $\d^n(a)=\OO(1/\sqrt n)$.
We insert  the expression for $\d^n(a)$ into Equation \eqref{I1}  to obtain 
\begin{eqnarray}\label{bla.13}
&&I_n^{\WW}(a)\nonumber\\
&&=I(a) -\frac{1}{\sqrt n}X_n( \vt(a))
- \frac{ 1}{2}\left(g''( \vt(a))+\frac{1}{\sqrt n}X_n''( \vt(a))\right)\nonumber\\
&&\times \left[ 
\frac{\sfrac 1n(X_n'( \vt(a)))^2}{\left(g''( \vt(a))+\frac{1}{\sqrt n}X_n''( \vt(a))\right)^2}-\frac{\frac{1}{\sqrt n}X_n'( \vt(a))\po(\d^n)}{ \left(g''( \vt(a))+\frac{1}{\sqrt n}X_n''( \vt(a))\right)}+\po((\d^n)^2)\right]\nonumber \\
&&+\frac{\frac{1}{n}(X_n'( \vt(a)))^2}{g''( \vt(a))+\frac{1}{\sqrt n}X_n''( \vt(a))}+ \frac{1}{\sqrt n}X_n'( \vt(a))\po(\d^n)+\po((\d^n)^2).
\end{eqnarray}
Combining this with the bound \eqv(bla.12),  it follows that
\begin{equation}\label{final}
I_n^{\WW}(a)=I(a) -\frac{1}{\sqrt n}X_n( \vt(a)) + \frac 1n r_n(a) ,
\end{equation}
where
\begin{equation}\label{final.2}
r_n(a) \equiv \frac{\frac{1}{2}(X_n'( \vt(a)))^2}{g''( \vt(a))+\frac{1}{\sqrt n}X_n''( \vt(a))} +\po(1).
\end{equation}
 $r_n(a)$ converges weakly due to the  continuous mapping theorem and the joint weak convergence of $X_n'( \vt(a))$ and $X_n''( \vt(a))$. This completes the proof of the theorem.
\end{proof}
 
\bibliographystyle{abbrv}
\bibliography{bib_arxiv}

\end{document}